\documentclass[a4paper,11pt]{article}
\usepackage{amsmath,amsthm,amssymb}
\usepackage{mathrsfs}
\usepackage{amsfonts}
\usepackage[all]{xy}

\newcommand{\bgeq}{\begin{equation}}
\newcommand{\edeq}{\end{equation}}
\newcommand{\bgdisp}{\begin{displaymath}}
\newcommand{\eddisp}{\end{displaymath}}
\newcommand{\X}{\mathfrak X}

\newtheorem{definition}{Definition}[section]
\newtheorem{lemma}[definition]{Lemma}

\newtheorem{proposition}[definition]{Proposition}
\newtheorem{theorem}[definition]{Theorem}
\newtheorem{remark}[definition]{Remark}

\newtheorem{example}[definition]{Example}

\newcommand{\Tt}{\mathbb T}

\title{\textbf{Reduction and construction of Poisson quasi-Nijenhuis manifolds with background}}
\author{Fl\'avio Cordeiro\,\dag \, and \, Joana M. Nunes da Costa\,\ddag\\[0.4cm]
{\small \dag Mathematical Institute, University of Oxford,
England}\\{\small \ddag CMUC, University of Coimbra, Portugal}}


\begin{document}
\maketitle
\begin{abstract}
We extend the Falceto-Zambon version of Marsden-Ratiu Poisson
reduction to Poisson quasi-Nijenhuis structures with background on
manifolds. We define gauge transformations of Poisson
quasi-Nijenhuis structures with background, study some of their
properties and show that they are compatible with reduction
procedure. We use gauge transformations to construct Poisson
quasi-Nijenhuis structures with background.
\end{abstract}

\section*{Introduction}
Poisson quasi-Nijenhuis structures with background were recently
introduced by Antunes \cite{bib8} and include, as a particular
case,
 the Poisson quasi-Nijenhuis structures defined by Sti\'enon and Xu \cite{bib9}. The structure consists of a Poisson bivector together
 with a $(1,1)$-tensor and two closed $3$-forms fulfilling some compatibility conditions. In \cite{bib1}, Zucchini showed that some physical
 models provide a structure which is a bit more general than Poisson quasi-Nijenhuis manifolds with background. In fact, as it is observed in
  \cite{bib8}, comparing with our definition, in Zucchini's
 definition
 one condition is missing.
Generalized complex structures with background, also called
twisted generalized complex structures, are another special case
of Poisson quasi-Nijenhuis structures with background. They were
introduced by Gualtieri \cite{bib17} and further studied, among
other authors, by Lindstr\"{o}m {\em et al} \cite{bib21} and
Zucchini \cite{bib1} in relation with sigma models in physics.

In order to simplify the writing, we will use PqNb for Poisson
quasi-Nijenhuis with background, PqN for Poisson quasi-Nijenhuis,
PN for Poisson-Nijenhuis and gc for generalized complex.

The aim of this paper is two fold. Firstly, we study reduction of
PqNb manifolds and secondly, by means of a technique that we call
gauge transformation, we are able to construct these structures
from simpler ones. Moreover, we prove that these two procedures
are compatible in the sense that they commute.

One of our goals is to extend Poisson reduction to PqNb
structures. The classical Marsden-Ratiu \cite{bib6}
 method of Poisson reduction by distributions was recently reformulated by Falceto and Zambon \cite{bib24} and it is this new version of Poisson
 reduction that we apply to PqNb structures. Our scheme is the following: reduce the Poisson bivector on the manifold
 and then establish the conditions ensuring that the remaining tensor fields that define the PqNb structure also descend
 to the quotient in such a way that the reduced structure is in fact a PqNb structure.

 In this paper we view gc structures with background as particular cases of PqNb structures. Thus, in a very natural way, gc structures with background
 gain a reduction procedure which turns to be a generalization of Vaisman's reduction theorem of gc structures
 (Theorem 2.1 in \cite{bib2}).
There are other different approaches to reduction of gc structures and gc structures with background
(see \cite{bib5.5, bib4, bib5, bib3, Zambon}).

Besides reduction, the other main notion in this paper is gauge
transformation. Inspired by the corresponding notion for gc
structures, also called $B$-field transformation, we define gauge
transformations of PqNb structures and realize that they can be
seen as a tool for constructing PqNb structures from other PqNb
structures. In particular, we may construct richer examples of
such structures from simpler ones and, indeed, we construct a new
class of PqNb structures by applying gauge transformations to the
simplest PqNb structures, i.e. those consisting just of a Poisson
bivector. Unlike gauge transformations of Dirac structures which
are graphs of Poisson bivectors
  \cite{bib23, BurszRadko}, our notion
  of gauge transformation preserves the Poisson bivector of the PqNb structure. Moreover, these gauge transformations
   share very interesting
  properties, some of them we discuss, and which may be used to study the class of all PqNb structures on a given manifold. We should mention that, in \cite{bib1}, Zucchini gives a similar definition of gauge transformation with respect to the structure defined there, but he doesn't present the proof that the gauge transformations preserve such structure.

The paper is organized as follows. In section 1, PqNb structures
and gc structures with background are recalled. Section 2 is
devoted to reduction. After a brief review of Poisson reduction,
in the sense of Falceto-Zambon, we give a reduction theorem
 for PqNb manifolds and we also discuss the case of reduction by a group action. Still in section 2, we treat the
  reduction of gc structures with background. In section 3, we introduce
  the concept of gauge transformation of PqNb structures and we show how to use it to construct richer examples of PqNb structures from simpler ones. We also consider conformal change by Casimir functions and, combining it with gauge
  transformation, we obtain new examples of PqNb structures. We study some properties of gauge transformations and, finally, we show that gauge transformations commute with reduction. The paper closes with
   an appendix containing the proof of some technical lemmas.

\section{Preliminaries}
\subsection{Poisson quasi-Nijenhuis manifolds with background}\label{subsec_PqN_backgrnd}

Let us recall that  a bivector field $Q\in \X^2(M)$ on a $C^{\infty}$-differentiable manifold $M$
 determines a bracket $[\,,]_Q$ of $1$-forms $\alpha, \beta \in \Omega^1(M)$:
\bgeq \label{eq1}
[\alpha,\beta]_{Q}=\mathcal{L}_{Q^{\sharp}\alpha}(\beta) -
\mathcal{L}_{Q^{\sharp}\beta}(\alpha) - d(Q(\alpha,\beta)) \,.
\edeq

Given a $(1,1)$-tensor $A$ on $M$,
$A:TM\to TM$, $[\, ,]_A$ denotes the bracket of vector fields $X,Y \in \X^1 (M)$ deformed
by $A$: \bgdisp [X,Y]_{A}=[AX,Y] + [X,AY] -
A[X,Y] , \eddisp
 $\imath_A$ is the  $0$-degree derivation on the graded algebra $(\Omega(M),\wedge)$ given by
\begin{eqnarray}
& & (\imath_A \alpha)(X_1,X_2,\ldots,X_k) = \alpha(AX_1,X_2,\ldots,X_k) \nonumber \\
& & + \alpha(X_1,AX_2,\ldots,X_k) + \cdots +
\alpha(X_1,X_2,\ldots,AX_k)\nonumber
\end{eqnarray}
and  $d_A$ is the derivation
of degree $1$ on $(\Omega(M),\wedge)$  given by
 \bgeq\label{eq1.6}
d_A=[\imath_A,d]=\imath_A\circ d - d\circ \imath_A \,. \edeq
The {\it Nijenhuis torsion} of $A$ is the $(1,2)$-tensor
$\mathcal{N}_A$ defined by
\begin{equation}
\mathcal{N}_{A}(X,Y)  =  \left[AX,AY\right]-A(\left[AX ,Y\right] +
\left[X,AY \right] -A \left[X ,Y\right]), \, \, X,Y\in \X^1(M).\nonumber
\end{equation}

For the bracket $[\,,]_{A}$ on $\X^1(M)$, the corresponding
bracket of $1$-forms determined by the bivector $Q$, which we
denote by $([\,,]^A)_Q$, is given by \eqref{eq1} where $d$ is
replaced by $d_A$ and $\mathcal{L}$ by $\mathcal{L}^A$,
$\mathcal{L}_{X}^A=\imath_X\circ d_A + d_A\circ \imath_X \,$. The
{\it concomitant} of $P$ and $A$ is the $(2,1)$-tensor
$\mathcal{C}_{P,A}$ given by \bgdisp
\mathcal{C}_{P,A}(\alpha,\beta) =
\frac{1}{2}\left(\left([\alpha,\beta]^{A}\right)_P -
\left([\alpha,\beta]_P\right)_{A^t}\right), \eddisp where
$A^t:T^*M \to T^*M$ denotes the transpose of $A$ and $([\,,]_P )_{A^t}$ is the bracket $[\,,]_P$ deformed by
$A^t$. This is
equivalent to
\begin{eqnarray}
\mathcal{C}_{P,A}(\alpha,\beta)& = &
\mathcal{L}_{P^{\sharp}\beta}(A^t\alpha) -
\mathcal{L}_{P^{\sharp}\alpha}(A^t\beta) +
A^t\mathcal{L}_{P^{\sharp}\alpha}(\beta) -
A^t\mathcal{L}_{P^{\sharp}\beta}(\alpha) \nonumber\\
&  & + d\left(P(A^t\alpha,\beta)\right) -
A^td\left(P(\alpha,\beta)\right)\,,\label{eq1.8}
\end{eqnarray}
for all $\alpha,\beta\in\Omega^1(M)$. This concomitant
is one half of that defined in \cite{bib8}.

Poisson quasi-Nijenhuis structures with background were recently
defined by Antunes in \cite{bib8}. We now propose a slightly
different definition. For the interior product a form $\omega \in \Omega(M)$ by the bivector $X \wedge Y$, we use the convention
$\imath_{X\wedge Y}\omega=\imath_Y\imath_X \omega$.
\begin{definition}\label{def1}
A Poisson quasi-Nijenhuis structure with background on a manifold
$M$ is a quadruple $(P,A,\phi,H)$ of tensors on $M$ where $P$ is a
Poisson bivector, $A:TM\to TM$ is a $(1,1)$-tensor and $\phi$ and
$H$ are closed $3$-forms, such that
\begin{eqnarray}
 & & A\circ P^{\sharp} = P^{\sharp}\circ A^t \,,\label{eq2.1} \\
 & & \mathcal{C}_{P,A}(\alpha,\beta) = - \imath_{P^{\sharp}\alpha \wedge P^{\sharp}\beta}H \,,\label{eq2.2}\\
 & & \mathcal{N}_{A}(X,Y) = P^{\sharp}\left(\imath_{X\wedge Y}\phi + \imath_{AX\wedge Y}H + \imath_{X\wedge AY}H   \right)\,,\label{eq2.3}\\
 & & d_A\phi= d\mathcal{H} \,,\label{eq2.4}
\end{eqnarray}
for all $X,Y\in \X^1(M)$, $\alpha,\beta\in \Omega^1(M)$, and where
$\mathcal{H}$ is the $3$-form given by \bgeq\label{eq3}
\mathcal{H}(X,Y,Z) = \circlearrowleft_{X,Y,Z}H(AX,AY,Z)\,, \edeq
for all $X,Y,Z\in \X^1(M)$, the symbol $\circlearrowleft_{X,Y,Z}$
meaning a sum over the cyclic permutations of $(X,Y,Z)$. The
$3$-form $H$ is called the background and the manifold $M$ with
such structure is said to be a Poisson quasi-Nijenhuis manifold
with background.
\end{definition}
The difference between this definition and that given in
\cite{bib8} is the minus sign in equation \eqref{eq2.2}. With this
change, the definition above contains the class of
generalized complex manifolds with background
 and, moreover, enables us to define
the concept of gauge transformations of PqNb structures.

When $H=0$, this reduces to the Poisson quasi-Nijenhuis structures
defined in \cite{bib9}. If, in addition, $\phi=0$, we get the
Poisson-Nijenhuis structures introduced in
\cite{bib11}.

A very simple example of a PqNb structure is the following.

\begin{example} \label{ex1}
{\rm Consider $\mathbb R ^3$ with coordinates $(x_1,x_2,x_3)$ and
take any $C^{\infty}$ functions $f:\mathbb R ^3\to \mathbb R
\backslash \{0\}$ and $g:\mathbb R ^3\to \mathbb R $ such that
$\displaystyle{\frac{\partial g}{\partial x_1}=\frac{\partial
g}{\partial x_2}=0}$ at any point. Then, the quadruple
$(P,A,\phi,H)$ with $\displaystyle{P=f\frac{\partial}{\partial
x_1}\wedge \frac{\partial}{\partial x_2}}$, $\displaystyle{A=
g(\frac{\partial}{\partial x_1}\otimes dx_1 +
\frac{\partial}{\partial x_2}\otimes dx_2+
\frac{\partial}{\partial x_3}\otimes dx_3)}$,
$H=\displaystyle{-\frac{1}{f}\frac{\partial g}{\partial x_3}dx_1
\wedge dx_2 \wedge dx_3}$ and $\phi =-2gH$ is a PqNb structure on
$\mathbb R ^3$. Notice that $\mathcal{C}_{P,A}(\alpha,\beta) =
\frac{\partial g}{\partial x_3}P(\alpha, \beta) dx_3=-
\imath_{P^\sharp \alpha \wedge P^\sharp \beta}H$, for all
$\alpha,\beta\in\Omega^1(\mathbb R ^3)$, and
 $A$ is a Nijenhuis tensor, $\mathcal{N}_{A}(X,Y)= P^\sharp (
 \imath_{X \wedge Y}(\phi + 2 gH))=0$, for all $X, Y \in \X^1(\mathbb R
^3)$.}
\end{example}

\begin{remark} \label{remark_Zucchini}
{\rm In \cite{bib1}, Zucchini has shown that the geometry of the
Hitchin sigma model incorporates all the defining conditions of a
Poisson quasi-Nijenhuis manifold with background except the last
one, condition (\ref{eq2.4}). This was what he called an
$H$-twisted Poisson quasi-Nijenhuis manifold, which is slightly
more general than a Poisson quasi-Nijenhuis manifold with
background but does not satisfy some integrability conditions.}
\end{remark}

The concept of PqNb structure on a manifold, given in Definition
\ref{def1}, can be generalized for generic Lie algebroids. This
was in fact the approach followed in \cite{bib8} (see also
\cite{bib12}). However, in the case of the results presented here,
the generalization is always straightforward so that we prefer to
work with the standard Lie algebroid all the time.

\subsection{Generalized complex structures with background}\label{gcsb}
Let $M$ be a manifold and consider the {\it generalized tangent
  bundle} ${\mathbb T}M=TM\oplus T^*M$. This vector bundle is the ambient framework of
generalized complex geometry. This is a recent subject introduced
by Hitchin \cite{bib16}, and further studied by Gualtieri
\cite{bib17} and other authors, e.g. \cite{bib18,bib21,bib3,bib2},
which contains the symplectic and complex geometries as extreme
cases. The Lie bracket of vector fields on $M$ extends to the
well-known {\it Courant bracket} $[\,\,,\,]$ on $\Gamma({\mathbb
T}M)$: \bgeq \label{eq7} [X + \alpha, Y + \beta]= [X,Y] +
{\mathcal L}_X\beta - {\mathcal L}_Y\alpha +
\frac{1}{2}d(\alpha(Y) - \beta(X)) \,, \edeq for all $X,Y\in
\X^1(M)$ and $\alpha,\beta\in\Omega^1(M)$. Given a closed $3$-form $H$ on $M$, the
Courant bracket can be deformed into the {\it Courant bracket with background $H$},
$[\,\,,\,]_H$, by simply adding an $H$-dependent
term: \bgeq [X+\alpha,Y+\beta]_H =
[X+\alpha,Y+\beta] - \imath_{X\wedge Y}H \,. \edeq

By the well-known Newlander-Nirenberg theorem, a complex structure
on $M$ is equivalent to a vector bundle map $J:TM\to TM$
satisfying $\displaystyle{J^2=-{\rm Id}}$ and having a null
Nijenhuis torsion. The passage to the generalized case is done by
substituting the tangent bundle $TM$ by ${\mathbb T}M$ and the
bracket of vector fields by the Courant bracket:

\begin{definition} \label{def_gc}
Given a closed $3$-form $H$ on $M$, a generalized complex
structure with background $H$ on $M$ is a vector bundle map
$\mathcal{J}:{\mathbb T}M\to {\mathbb T}M$ satisfying
$\mathcal{J}^2=-{\rm Id}$ and such that the following
integrability condition holds: \bgdisp
[\mathcal{J}\mu,\mathcal{J}\nu]_H-\mathcal{J}[\mathcal{J}\mu,\nu]_H
- \mathcal{J}[\mu,\mathcal{J}\nu]_H - [\mu,\nu]_H=0 \,, \eddisp
for all $\mu,\nu\in \Gamma({\mathbb T}M)$. The triple
$(M,\mathcal{J},H)$ is called a {\it generalized
  complex manifold with background $H$} (or an $H$-twisted
generalized complex manifold). When $H=0$,
$\mathcal{J}$ is said to be a generalized complex structure and
$(M,\mathcal{J})$ a {\it generalized complex manifold}.
\end{definition}

\begin{remark}  \label{remark_def_gc}
{\rm
Equivalently, a gc structure with background $H$ on $M$ can be defined as an isotropic complex vector subbundle $L$ of the complexification $\Tt_{\mathbb C}M= \Tt M \otimes {\mathbb C}$ such that $\Tt_{\mathbb C}M=L \oplus {\overline L}$, where ${\overline L}$ is the conjugate of $L$, and $L$ is involutive with respect to the Courant bracket $[\, ,]_ H$. The complex Dirac structures $L$ and ${\overline L}$ are the $(+i)$ and $(-i)$-eigenbundles of ${\mathcal J}$. We prefer to use Definition \ref{def_gc} since it is the one that provides the natural link with PqNb structures.}
\end{remark}

The following result completely characterizes generalized complex
structures with background in terms of classical tensors. It was
referred in \cite{bib21,bib2} and it is a simple extension of the
analogous result of Crainic \cite{bib18} for the case $H=0$.
\begin{theorem}\label{theorem0.8}
A vector bundle map $\mathcal{J}:{\mathbb T}M\to {\mathbb T}M$ is
a generalized complex structure with background $H$ on $M$ if and
only if it can be written in the form \bgeq\label{eq8}
\mathcal{J}=\left(\begin{array}{cc}
A & P^\sharp \\
\sigma^\flat & -A^t \end{array}\right) \,, \edeq where $P$ is a
bivector on $M$, $\sigma$ a $2$-form on $M$, and $A:TM\to TM$ a
$(1,1)$-tensor,  such that:
\begin{itemize}
\item[{\rm(1)}]
 $P$ is a Poisson bivector;
\item[{\rm(2)}]
 $ A\circ P^\sharp = P^\sharp
\circ A^t$ \quad  and \quad
$\mathcal{C}_{P,A}(\alpha,\beta)=-\imath_{P^\sharp\alpha \wedge
  P^\sharp\beta}H$;
 \item[{\rm(3)}]
$
\mathcal{N}_A(X,Y)=P^\sharp\left(\imath_{X\wedge Y}d\sigma +
\imath_{AX\wedge Y} H + \imath_{X\wedge AY} H  \right)$;
\item[{\rm(4)}]
 the $(0,2)$-tensor $\sigma_A$ defined by
$\sigma_A(X,Y)=\sigma(AX,Y)$ is antisymmetric and satisfies the
relation \bgeq\label{eq9} d\sigma_A + H - \imath_Ad\sigma -
\mathcal{H}=0 \,, \edeq where $\mathcal{H}$ is given by
\eqref{eq3};
 \item[{\rm(5)}]
  $ A^2=-{\rm Id} -
P^{\sharp}\circ \sigma^{\flat}$.
\end{itemize}
\end{theorem}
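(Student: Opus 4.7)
The plan is to derive the five conditions from the two defining properties of a gc structure with background --- namely $\mathcal{J}^2=-{\rm Id}$ and the integrability condition --- by splitting the ambient bundle $\Tt M$ into its two summands and testing on all pairs of generators. The argument is essentially the one used by Crainic \cite{bib18} in the case $H=0$, carefully tracking the additional $\imath_{\cdot\wedge\cdot}H$ terms carried by the Courant bracket with background.

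First, using the equivalent characterisation of gc structures recalled in Remark \ref{remark_def_gc}, $\mathcal{J}$ preserves the canonical symmetric pairing on $\Tt M$. A straightforward linear-algebra argument then forces $\mathcal{J}$ to have the block form \eqref{eq8} with $P$ and $\sigma$ antisymmetric. Expanding $\mathcal{J}^2=-{\rm Id}$ block by block yields, in order, condition (5) from the upper-left block, the first equation of (2) from the upper-right block, the antisymmetry of $\sigma_A$ (the first half of (4)) from the lower-left block together with the skewness of $\sigma$, and an identity equivalent by transposition to (5) from the lower-right block.

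Next, one unpacks the integrability tensor
\[
N_\mathcal{J}^H(\mu,\nu)=[\mathcal{J}\mu,\mathcal{J}\nu]_H-\mathcal{J}[\mathcal{J}\mu,\nu]_H-\mathcal{J}[\mu,\mathcal{J}\nu]_H-[\mu,\nu]_H.
\]
After verifying that $N_\mathcal{J}^H$ is $C^\infty(M)$-bilinear (a standard check using the derivation properties of $[\,,\,]_H$ together with $\mathcal{J}^2=-{\rm Id}$), it suffices to evaluate it on pairs of generators of $\Gamma(\Tt M)$ and project the result onto the $TM$ and $T^*M$ summands. Taking both arguments in $\Omega^1(M)$ and projecting to $TM$ yields the Jacobi identity for $[\,,]_P$, that is, condition (1); mixing one vector field and one $1$-form and projecting to $TM$ produces the concomitant equation, i.e.\ the second half of (2); taking both arguments in $\X^1(M)$ and projecting to $TM$ delivers the Nijenhuis identity (3); and the same vector-vector pair projected to $T^*M$ gives \eqref{eq9}, the second half of (4). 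The remaining projections are algebraic consequences of those already established.

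The converse direction reverses each of these computations: under hypotheses (1)--(5) the expansions show that every component of $N_\mathcal{J}^H$ vanishes on every pair of generators, so $N_\mathcal{J}^H\equiv 0$ by $C^\infty$-bilinearity. The main obstacle is the bookkeeping in the vector-vector case: the $T^*M$-projection requires a careful expansion of the Lie derivatives and interior products appearing in the background Courant bracket in order to recognise the combination $d\sigma_A+H-\imath_A d\sigma-\mathcal{H}$, and the corresponding $TM$-projection must correctly produce the three $H$-dependent terms on the right-hand side of (3). Once these identifications are made, the rest is routine manipulation.
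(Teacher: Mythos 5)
The paper contains no proof of Theorem \ref{theorem0.8} to compare yours against: it is presented as a known result, ``referred in \cite{bib21,bib2}'' and described as ``a simple extension of the analogous result of Crainic \cite{bib18} for the case $H=0$''. Your outline is precisely that standard argument, and at the level of strategy it is sound. In particular, your block-by-block reading of $\mathcal{J}^2=-{\rm Id}$ is correct: the upper-left block gives (5), the upper-right gives $AP^\sharp=P^\sharp A^t$, the lower-left gives $\sigma^\flat A=A^t\sigma^\flat$, i.e.\ the antisymmetry of $\sigma_A$, and the lower-right is the transpose of (5). You are also right that the block form \eqref{eq8} itself must be extracted from compatibility with the canonical pairing; since Definition \ref{def_gc} as written requires only $\mathcal{J}^2=-{\rm Id}$ and integrability, importing the orthogonality from the eigenbundle characterisation in Remark \ref{remark_def_gc} is a necessary (and correctly identified) step, without which the forward implication would fail.

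Two points deserve tightening. First, the $C^\infty(M)$-bilinearity of $N_\mathcal{J}^H$ does not follow from the derivation property and $\mathcal{J}^2=-{\rm Id}$ alone: the anomaly $[\mu,f\nu]_H=f[\mu,\nu]_H+(\rho(\mu)f)\nu-\langle\mu,\nu\rangle\,df$ leaves behind the terms $\left(\langle\mu,\nu\rangle-\langle\mathcal{J}\mu,\mathcal{J}\nu\rangle\right)df+\left(\langle\mathcal{J}\mu,\nu\rangle+\langle\mu,\mathcal{J}\nu\rangle\right)\mathcal{J}df$, which cancel only because $\mathcal{J}$ is orthogonal and skew-adjoint for the pairing; you have this available from the block form, but you should invoke it explicitly. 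Second, the substantive content of the theorem lives in the expansions you describe but do not carry out: the assignment of the concomitant identity to the mixed vector/$1$-form component is one consistent bookkeeping (the $T^*M$-projection of $N_\mathcal{J}^H(\alpha,\beta)$ yields it as well), and the claim that ``the remaining projections are algebraic consequences of those already established'' is asserted rather than checked. This matters here because the sign in $\mathcal{C}_{P,A}(\alpha,\beta)=-\imath_{P^\sharp\alpha\wedge P^\sharp\beta}H$ is exactly the point on which this paper's Definition \ref{def1} deviates from \cite{bib8}, and it depends on the convention $\imath_{X\wedge Y}\omega=\imath_Y\imath_X\omega$ and on the sign of the $H$-term in the twisted Courant bracket. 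These are gaps of detail rather than of conception, but a complete proof would have to perform those expansions.
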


By comparing this theorem with Definition \ref{def1}, we
immediately see that a gc structure with background is a special
case of a PqNb structure. Notice that equation (\ref{eq2.4}), with $\phi= d \sigma$, follows from (\ref{eq9}). Thus, we can write Theorem \ref{theorem0.8}
as follows:
\begin{theorem}\label{theorem0.9}
Let $\mathcal{J}:{\mathbb T}M\to {\mathbb T}M$ be a vector bundle
map of the form \eqref{eq8}, $\mathcal{J}:= (A,P, \sigma)$. Then,
$\mathcal{J}$ is a generalized complex structure with background
$H$ on $M$ if and only if $(P,A,d\sigma,H)$ is a Poisson
quasi-Nijenhuis structure with background on $M$ and properties 4
and 5 of Theorem \ref{theorem0.8} are satisfied.
\end{theorem}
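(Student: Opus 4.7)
The plan is to compare Theorem~\ref{theorem0.8} with Definition~\ref{def1} term by term. With $\phi = d\sigma$, properties (1), (2), and (3) of Theorem~\ref{theorem0.8} are, respectively, the Poisson condition on $P$, equations \eqref{eq2.1}--\eqref{eq2.2}, and equation \eqref{eq2.3}; moreover $\phi = d\sigma$ is automatically closed and $H$ is closed by assumption. Thus the only requirement of Definition~\ref{def1} that is not literally among (1)--(5) of Theorem~\ref{theorem0.8} is equation \eqref{eq2.4}, $d_A(d\sigma) = d\mathcal{H}$. The \emph{if} direction of Theorem~\ref{theorem0.9} is then immediate, since the PqNb hypothesis supplies (1)--(3) and the statement explicitly assumes (4) and (5).

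For the \emph{only if} direction, the single remaining task is to deduce \eqref{eq2.4} from property (4), specifically from \eqref{eq9}. The plan is to apply $d$ to \eqref{eq9} and exploit $d^2 = 0$ together with the closedness of $H$: this collapses the identity to $d(\imath_A d\sigma) = -d\mathcal{H}$. On the other hand, the definition $d_A = \imath_A \circ d - d \circ \imath_A$ in \eqref{eq1.6}, applied to the closed form $d\sigma$, makes the $\imath_A \circ d$ piece vanish and leaves $d_A(d\sigma) = -d(\imath_A d\sigma)$. Combining the two gives precisely $d_A(d\sigma) = d\mathcal{H}$.

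There is no serious obstacle. The main point to watch is the sign convention in \eqref{eq1.6} and the observation that it is the closedness of $H$, not of $\phi$ (which here is automatically $d\sigma$), that is being used to cancel $dH$ in the computation. The content of the theorem is the neat remark that the ``derivative'' PqNb axiom \eqref{eq2.4} is already hidden inside the algebraic normalization identity \eqref{eq9}, and is recovered simply by differentiating.
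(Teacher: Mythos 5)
Your proposal is correct and follows essentially the same route as the paper, which disposes of the theorem by term-by-term comparison with Definition~\ref{def1} and the one-line remark that \eqref{eq2.4} with $\phi=d\sigma$ follows from \eqref{eq9}. Your computation $d_A(d\sigma)=-d(\imath_A d\sigma)=d\mathcal{H}$, obtained by applying $d$ to \eqref{eq9} and using $d^2=0$ and $dH=0$, is exactly the detail the paper leaves implicit, and your signs check out.
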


Now, let $(P,A,d\sigma,H)$ be a PqNb structure on $M$ and take a
closed $2$-form $\omega$ on $M$. It is obvious that the
replacement of $\sigma$ by $\sigma + \omega$ makes no change in
the PqNb structure.  However, if, additionally,  conditions $4$
and $5$ of Theorem \ref{theorem0.8} are satisfied, i.e.
$\mathcal{J}_{\sigma}:= (A,P, \sigma)$  is a gc structure with
background $H$, one can ask under what conditions is
$\mathcal{J}_{\sigma +\omega}:= (A,P, \sigma+ \omega)$ still a gc
structure with background $H$ on $M$. An immediate computation
shows that this happens if and only if the $(0,2)$-tensor
$\omega_A$ is antisymmetric, $d\omega_A=0$ and $P^{\sharp} \circ
\omega^{\flat}=0$. This means that a PqNb structure on $M$ has
more than one gc structure with background associated with it.
Also, this defines an equivalence relation on the subclass of all
gc structures having the background $H$ and the Poisson bivector $P$.

\section{Reduction of Poisson quasi-Nijenhuis manifolds with background}
\subsection{Reduction of Poisson manifolds}
A well-known reduction procedure for Poisson manifolds is the one
due to Marsden and Ratiu \cite{bib6}. Roughly speaking, given a
Poisson manifold $(M,P)$, a submanifold $N$ of $M$ and a vector
subbundle $E$ of $TM|_N$, Marsden-Ratiu reduction theorem
establishes necessary and sufficient conditions to have a Poisson
structure on the quotient $N/(E \cap TN)$. Recently,
Falceto and Zambon \cite{bib24} showed that the assumptions of
Marsden-Ratiu theorem are too strong and they gave more flexible
hypothesis on the subbundle $E$ still providing a Poisson
structure on the quotient submanifold.

\begin{definition}\label{def1.5}
Let $(M,P)$ be a Poisson manifold, $i_N:N\subset M$ a submanifold
of $M$ and $E$ a vector subbundle of $TM|_N$ such that
 $E\cap TN$ is an integrable subbundle of $TN$ and the foliation of $N$ defined by such subbundle is simple, i.e. the set $Q$ of leaves is a manifold and the canonical projection $\pi:N \to Q$ is a submersion.
The quadruple $(M,P, N, E)$ is said to be Poisson reducible if $Q$
inherits a Poisson structure $P'$ defined by\bgeq \label{eq10}
\{f,h\}_{P'} \circ \pi = \{F,H\}_P \circ i_N \,, \edeq for any
$f,h\in C^{\infty}(Q)$ and any extensions $F,H\in C^{\infty}(M)$
of $f\circ \pi$, $h\circ \pi$, respectively, with $dF$ and $dH$
vanishing on $E$.
\end{definition}

Using the notation of \cite{bib24}, we denote by $C^\infty(M)_E$
the following subset of $C^\infty(M)$,
\[
C^\infty(M)_E:= \{ f \in C^\infty(M)\, | \, df_{| \,E}=0 \}.
\]

\begin{theorem}[\cite{bib24}] \label{theorem0.5}
Let $(M,P)$ be a Poisson manifold, $i_N:N\subset M$ a submanifold
of $M$ and $E$ a subbundle of $TM|_N$ as in Definition
\ref{def1.5}. Let $D$ be a  subbundle of $TM|_N$ such that  $E
\cap TN \subset D \subset E$ and ${\mathcal E} \subset
C^\infty(M)_E$ a multiplicative subalgebra such that the
restriction map $i_N^* : {\mathcal E} \to C^\infty(N)_{E \cap TN}$
is surjective. If
$\{ {\mathcal E}, {\mathcal E} \} \subset C^\infty(M)_D$ and
$P^\sharp(E^0)\subset TN + D$,
  then $(M,P, N, E)$ is Poisson reducible.
\end{theorem}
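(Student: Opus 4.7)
The strategy is to verify directly that formula \eqref{eq10} yields a well-defined, skew-symmetric, smooth, Leibniz bracket on $C^\infty(Q)$ satisfying the Jacobi identity, which is then the Poisson bracket of the sought bivector $P'$ on $Q$. The plan has four parts: (i) existence of an extension $F \in \mathcal{E}$ of $f \circ \pi$ for each $f \in C^\infty(Q)$; (ii) independence of $\{F,H\}_P \circ i_N$ from the chosen extensions; (iii) descent of $\{F,H\}_P \circ i_N$ through the projection $\pi$; (iv) Leibniz and Jacobi identities. Each step isolates the role of one of the hypotheses.

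Part (i) is immediate from the surjectivity of $i_N^\ast : \mathcal{E} \to C^\infty(N)_{E \cap TN}$, since $f \circ \pi$ lies in the target (it is constant along the leaves of $E \cap TN$). The heart of parts (ii) and (iii) is the following auxiliary lemma: \emph{if $\Phi \in C^\infty(M)_E$ vanishes on $N$ and $G \in C^\infty(M)_E$, then $\{\Phi,G\}_P|_N = 0$.} Its proof reduces to the identity $\{\Phi,G\}_P = \langle d\Phi, P^\sharp dG\rangle$ together with two observations: $d\Phi$ annihilates $TN + E$ along $N$ (because $\Phi|_N = 0$ forces $d\Phi|_{TN} = 0$, while $\Phi \in C^\infty(M)_E$ forces $d\Phi|_E = 0$), and $P^\sharp dG \in P^\sharp(E^0) \subset TN + D \subset TN + E$ along $N$. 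Applying the lemma to $\Phi = F_1 - F_2 \in \mathcal{E}$, which is legitimate because $\mathcal{E}$ is a subalgebra, yields (ii). For (iii), any section $V$ of $E \cap TN \subset D$ annihilates $\{F,H\}_P$ because $\{\mathcal{E},\mathcal{E}\} \subset C^\infty(M)_D$ by hypothesis, so $\{F,H\}_P \circ i_N$ is constant along the leaves and descends to a smooth function on $Q$, defining $\{f,h\}_{P'}$.

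Part (iv) contains the only genuine subtlety. Skew-symmetry is built into \eqref{eq10}, and the Leibniz rule is obtained by taking the product $F_1 F_2 \in \mathcal{E}$ as an extension of $(f_1 f_2)\circ \pi$, which is permissible because $\mathcal{E}$ is multiplicatively closed. For the Jacobi identity the natural plan is to identify $\{\{f,h\}_{P'}, k\}_{P'} \circ \pi$ with $\{\{F,H\}_P, K\}_P \circ i_N$ and then conclude by cyclic summation using the Jacobi identity for $P$. The main obstacle, to my mind, is that an extension $G \in \mathcal{E}$ of $\{f,h\}_{P'} \circ \pi$ provided by the surjectivity hypothesis need not equal $\{F,H\}_P$, and the latter only belongs a priori to $C^\infty(M)_D$, not to $C^\infty(M)_E$, so the auxiliary lemma as stated is not directly applicable to $\Phi = G - \{F,H\}_P$. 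The remedy is a strengthened version of the lemma: \emph{if $\Phi \in C^\infty(M)_D$ vanishes on $N$ and $K \in \mathcal{E}$, then $\{\Phi,K\}_P|_N = 0$}, proved by the same pairing calculation with $E$ replaced by $D$, using $d\Phi|_{TN + D} = 0$ and $P^\sharp dK \in P^\sharp(E^0) \subset TN + D$. Applied to $\Phi = G - \{F,H\}_P \in C^\infty(M)_D$, it produces the desired identification, and the Jacobi identity for $P'$ then follows from that of $P$ by cyclic summation and surjectivity of $\pi$.
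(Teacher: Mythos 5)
Your proposal is correct. Note that the paper itself gives no proof of this theorem: it is imported verbatim from Falceto--Zambon \cite{bib24}, and your argument is essentially the one found there --- the same pairing computation $\{\Phi,G\}_P=\langle d\Phi,P^\sharp dG\rangle$ with $d\Phi$ annihilating $TN+E$ (resp.\ $TN+D$) against $P^\sharp dG\in P^\sharp(E^0)\subset TN+D$, and the same use of the intermediate bundle $D$ to rescue the Jacobi identity, which is exactly where the weakening of the Marsden--Ratiu hypotheses bites. One small presentational remark: in part (ii) you should apply your auxiliary lemma to $\Phi=F_1-F_2$ for \emph{arbitrary} extensions $F_1,F_2\in C^\infty(M)_E$, not merely those in $\mathcal{E}$, since Definition \ref{def1.5} demands independence of the bracket over all of $C^\infty(M)_E$; your lemma is stated at that level of generality, so nothing is lost, but the parenthetical appeal to $\mathcal{E}$ being a subalgebra is unnecessary there (it is genuinely needed only for the Leibniz step, where you take the product $F_1F_2$, and for the Jacobi step, where the hypothesis $\{\mathcal{E},\mathcal{E}\}\subset C^\infty(M)_D$ must be invoked for extensions lying in $\mathcal{E}$).
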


A special case of the theorem above, which is considered in
\cite{bib24}, occurs when $D= E \cap TN$ and ${\mathcal E} =
C^\infty(M)_E$:

\begin{proposition} \label{prop_Fal_Zam}
Let $(M,P)$ be a Poisson manifold, $i_N:N\subset M$ a submanifold
of $M$ and $E$ a subbundle of $TM|_N$ as in Definition
\ref{def1.5}. If
 $\{ C^\infty(M)_E, C^\infty(M)_E \} \subset C^\infty(M)_{E \cap TN} $ and $ P^\sharp(E^0)\subset TN $, then $(M,P, N, E)$ is Poisson reducible.
\end{proposition}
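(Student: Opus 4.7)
The plan is to derive the proposition from Theorem \ref{theorem0.5} by specializing to $D := E \cap TN$ and $\mathcal{E} := C^\infty(M)_E$; the work then reduces to checking the five ingredients that Theorem \ref{theorem0.5} demands. The chain $E \cap TN \subset D \subset E$ is immediate. The set $\mathcal{E}$ is a multiplicative subalgebra of $C^\infty(M)$ because $d(fg) = f\,dg + g\,df$ vanishes on $E$ whenever $df$ and $dg$ do. The hypothesis $\{C^\infty(M)_E, C^\infty(M)_E\} \subset C^\infty(M)_{E \cap TN}$ is literally $\{\mathcal{E},\mathcal{E}\} \subset C^\infty(M)_D$, and $P^\sharp(E^0) \subset TN$ implies $P^\sharp(E^0) \subset TN + D$ since $D \subset TN$.

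The only substantive ingredient remaining is surjectivity of the restriction map $i_N^* : C^\infty(M)_E \to C^\infty(N)_{E \cap TN}$. I would establish this by a standard local-to-global construction. Given $f \in C^\infty(N)_{E \cap TN}$, locally pick a smooth complement $E'$ of $E \cap TN$ inside $E$; since a vector in $E' \cap TN$ would lie in $(E \cap TN) \cap E' = \{0\}$, the subbundle $E'$ is transversal to $N$. Extending a local frame of $E'$ to vector fields on a neighborhood of $N$ in $M$ and using their joint flow produces a local retraction $\pi$ onto $N$ whose fibres are tangent to $E'$ along $N$. The pullback $F_{\mathrm{loc}} := \pi^* f$ then satisfies $dF_{\mathrm{loc}}|_N(E') = 0$ by construction, while $dF_{\mathrm{loc}}|_{TN} = df$ annihilates $E \cap TN$ by the assumption on $f$, so $dF_{\mathrm{loc}}|_N$ vanishes on all of $E$. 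A partition of unity $\{\rho_\alpha\}$ subordinate to such a cover of a tubular neighborhood of $N$ glues the local extensions $F_\alpha$ into $F := \sum_\alpha \rho_\alpha F_\alpha$; since $F_\alpha|_N = f$ for every $\alpha$, on $N$ the differential reads $dF|_N = f\, d(\sum_\alpha \rho_\alpha)|_N + \sum_\alpha \rho_\alpha\, dF_\alpha|_N = \sum_\alpha \rho_\alpha\, dF_\alpha|_N$, which still annihilates $E$. Multiplying by a bump function equal to $1$ on a smaller neighborhood of $N$ produces a global $F \in C^\infty(M)_E$ with $F|_N = f$.

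The principal obstacle is precisely this surjectivity step; everything else is verbatim matching against Theorem \ref{theorem0.5}. The subtle point in the extension argument is to ensure that $dF$ vanishes on \emph{all} of $E$ along $N$ and not just on $E \cap TN$, which is what forces the choice of a complement $E'$ transversal to $TN$ and the explicit use of $F_\alpha|_N = f$ in the partition-of-unity cancellation.
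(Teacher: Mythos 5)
Your proposal is correct and follows essentially the same route as the paper, which gives no separate argument and simply records the proposition as the special case of Theorem \ref{theorem0.5} obtained by taking $D = E\cap TN$ and $\mathcal{E} = C^\infty(M)_E$, leaving the surjectivity of $i_N^*$ implicit as part of the standard setup of \cite{bib24}; your contribution is to spell that surjectivity out. Your extension argument is sound up to one small imprecision: the joint flow of a frame of $E'$ only sweeps out a set of dimension $\dim N + \operatorname{rank} E'$, so to get a retraction of a full neighbourhood of $N$ you should first enlarge $E'$ to a complement $W$ of $TN$ in $TM|_N$ (possible since $E'\cap TN=0$) and retract along $W$; with that adjustment the local-to-global gluing works as you describe.
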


The following result, that we will use later, relates the Poisson
bivector $P$ with its reduction $P'$. Its proof is
straightforward.
\begin{lemma} \label{theorem0.75}
Let $(M,P,N,E)$ be a quadruple satisfying conditions of Proposition
\ref{prop_Fal_Zam}, so that it is Poisson reducible to  $(Q,P')$.
Then, \bgeq \label{eq11.5}
P(\widetilde{\pi^*\lambda},\widetilde{\pi^*\eta})\circ
i_N=P'(\lambda,\eta)\circ \pi  \edeq
and \bgeq \label{eq11.6}
d\pi\circ
P^{\sharp}(\widetilde{\pi^*\lambda})=P'^{\sharp}(\lambda)\circ\pi
\,, \edeq
for any $\lambda,\eta\in
\Omega^1(Q)$ and any extensions
$\widetilde{\pi^*\lambda},\widetilde{\pi^*\eta}\in \Omega^1(M)$ of
$\pi^*\lambda,\pi^*\eta$ vanishing on $E$.
\end{lemma}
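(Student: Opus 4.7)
The plan is to first establish (\ref{eq11.5}) by reducing to the case where $\lambda$ and $\eta$ are exact, using (\ref{eq10}) in that case, and then derive (\ref{eq11.6}) from (\ref{eq11.5}) by a duality argument. The key preliminary point, needed in both parts, is that for any 1-form $\beta \in \Omega^1(M)$ vanishing on $E$ one has $P^{\sharp}\beta|_N \in TN$, which is precisely the hypothesis $P^{\sharp}(E^0) \subset TN$ built into Proposition \ref{prop_Fal_Zam}.

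I would begin with the exact case $\lambda = df$, $\eta = dh$, for $f,h \in C^{\infty}(Q)$. The surjectivity of $i_N^* : \mathcal{E} \to C^\infty(N)_{E \cap TN}$ underlying Proposition \ref{prop_Fal_Zam} provides extensions $F, H \in C^\infty(M)_E$ of $f \circ \pi$ and $h \circ \pi$. Then $dF$ and $dH$ vanish on $E$ and restrict to $\pi^*df$ and $\pi^*dh$ on $N$, so (\ref{eq10}) directly gives
\begin{displaymath}
P(dF, dH)\circ i_N = \{F,H\}_P \circ i_N = \{f,h\}_{P'} \circ \pi = P'(df, dh)\circ \pi,
\end{displaymath}
which is (\ref{eq11.5}) in this case.

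The passage to general $\lambda, \eta$ requires first showing that $P(\widetilde{\pi^*\lambda}, \widetilde{\pi^*\eta})\circ i_N$ does not depend on the chosen extensions. If $\alpha \in \Omega^1(M)$ vanishes on $E$ and $i_N^*\alpha = 0$, then for any $\beta$ vanishing on $E$ we have $P(\alpha, \beta)|_N = -\alpha(P^{\sharp}\beta)|_N = 0$, since $P^{\sharp}\beta|_N \in TN$ and $\alpha$ vanishes on $TN|_N$; antisymmetry of $P$ yields the analogous statement in the first slot. Combining this independence with the local representation $\lambda = \sum g_i\, df_i$ on a neighbourhood of any $q \in Q$, and choosing extensions $G_i, F_i \in C^\infty(M)$ of $g_i \circ \pi$ and $f_i \circ \pi$ with $dF_i$ vanishing on $E$, one sees that $\sum G_i\, dF_i$ is a local extension of $\pi^*\lambda$ vanishing on $E$. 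Bilinearity of $P$ and $P'$ then reduces (\ref{eq11.5}) pointwise to the exact case above.

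Finally, (\ref{eq11.6}) is a short consequence of (\ref{eq11.5}). Fix $p \in N$ and any $\eta \in \Omega^1(Q)$; pick an extension $\widetilde{\pi^*\eta}$ of $\pi^*\eta$ vanishing on $E$. Since $P^{\sharp}\widetilde{\pi^*\lambda}|_p \in T_pN$, evaluating $\eta$ on $d\pi(P^{\sharp}\widetilde{\pi^*\lambda})|_p$ is the same as evaluating $\pi^*\eta$, and hence $\widetilde{\pi^*\eta}$, on $P^{\sharp}\widetilde{\pi^*\lambda}|_p$, giving
\begin{displaymath}
\eta\bigl(d\pi(P^{\sharp}\widetilde{\pi^*\lambda})\bigr)\big|_p = P(\widetilde{\pi^*\lambda}, \widetilde{\pi^*\eta})\big|_p = P'(\lambda,\eta)\circ \pi\big|_p = \eta(P'^{\sharp}\lambda)\circ\pi\big|_p,
\end{displaymath}
where the middle equality is (\ref{eq11.5}). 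Since $\eta$ was arbitrary, (\ref{eq11.6}) follows. The only genuinely non-formal step is the independence of extensions, which is where the hypothesis $P^{\sharp}(E^0) \subset TN$ enters essentially; everything else is bookkeeping with the Falceto--Zambon definition of Poisson reducibility.
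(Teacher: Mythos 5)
Your proof is correct, and since the paper states this lemma with only the remark that ``Its proof is straightforward'' and supplies no argument, yours is exactly the omitted straightforward one: verify \eqref{eq11.5} for exact forms via \eqref{eq10}, show independence of the chosen extensions using $P^{\sharp}(E^0)\subset TN$, globalize by $C^{\infty}$-bilinearity of $P$ and $P'$, and deduce \eqref{eq11.6} by duality. I see no gaps.
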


A particular and important case where the assumptions of
Proposition \ref{prop_Fal_Zam} are satisfied is when a certain
canonical action (i.e. preserving the Poisson structure) of a Lie
group is given \cite{bib6}.

\begin{proposition}\label{theorem0.7}
Let $(M,P)$ be a Poisson manifold and consider a canonical action
of a Lie group $G$ on $(M,P)$ admitting an $Ad^*$-equivariant
moment map $J:M\to \mathcal{G}^*$ ($\mathcal{G}$ is the Lie
algebra of $G$ and $\mathcal{G}^*$ its dual). Suppose that $\mu\in
\mathcal{G}^*$ is a regular value of $J$ and that the isotropy
subgroup $G_\mu$ of $\mu$ for the coadjoint representation of $G$,
acts freely and properly on $N_\mu:=J^{-1}(\mu)$. Consider the
quotient $Q_\mu:=N_\mu/G_\mu$, the associated canonical projection
$\pi_\mu : N_\mu \to Q_\mu$ and the inclusion map
$i_\mu:N_\mu\subset M$ as well. Then $(Q_\mu, P_\mu)$ is a Poisson
manifold, its Poisson structure being defined by \bgdisp
\{f,h\}_{P_\mu}\circ\pi_\mu =\{F,H\}_P\circ i_\mu \,, \eddisp for
any $f,h\in C^{\infty}(Q_\mu)$ and any extensions $F,H\in
C^{\infty}(M)$ of $f\circ \pi_\mu$, $h\circ \pi_\mu$,
respectively, with $dF$ and $dH$ vanishing on $E_\mu$, where
$(E_\mu)_p=T_p(G\cdot p)$, for all $p\in N_\mu$, and $G\cdot p$ is
the orbit of $G$ containing $p$.
\end{proposition}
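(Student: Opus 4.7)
The plan is to recognize this proposition as a direct application of Proposition~\ref{prop_Fal_Zam} with $N=N_\mu$ and $E=E_\mu$, which reduces the task to checking three hypotheses: that $E_\mu\cap TN_\mu$ defines a simple foliation of $N_\mu$ with leaf space $Q_\mu$, that $P^\sharp(E_\mu^0)\subset TN_\mu$, and that $\{C^\infty(M)_{E_\mu},C^\infty(M)_{E_\mu}\}\subset C^\infty(M)_{E_\mu\cap TN_\mu}$.

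First, I would identify the intersection $E_\mu\cap TN_\mu$. Regularity of $\mu$ gives $T_pN_\mu=\ker dJ_p$ at every $p\in N_\mu$, and $Ad^*$-equivariance of $J$ yields $dJ_p((X_\xi)_p)=-\mathrm{ad}^*_\xi\mu$ for every fundamental vector field $X_\xi$, $\xi\in\mathcal{G}$. Hence $(X_\xi)_p\in T_pN_\mu$ precisely when $\xi$ lies in the Lie algebra $\mathcal{G}_\mu$ of $G_\mu$, so $(E_\mu\cap TN_\mu)_p=T_p(G_\mu\cdot p)$. Since $G_\mu$ acts freely and properly on $N_\mu$, this distribution integrates to a simple foliation whose leaf space is $Q_\mu$ and whose canonical projection $\pi_\mu$ is a submersion, which matches the setting of Definition~\ref{def1.5}.

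Next, to verify $P^\sharp(E_\mu^0)\subset TN_\mu$ I use the moment map identity $X_\xi=P^\sharp(dJ^\xi)$, where $J^\xi:=\langle J,\xi\rangle$. For $\alpha_p\in (E_\mu^0)_p$ and any $\xi\in\mathcal{G}$ one has
\[
\langle\xi,dJ_p(P^\sharp_p\alpha_p)\rangle = (dJ^\xi)_p(P^\sharp\alpha_p) = -\alpha_p((X_\xi)_p) = 0,
\]
so $dJ_p(P^\sharp\alpha_p)=0$ and therefore $P^\sharp_p\alpha_p\in\ker dJ_p=T_pN_\mu$.

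The main obstacle is the bracket condition, where canonicality of the action enters through the Jacobi identity. For $f,h\in C^\infty(M)_{E_\mu}$ and $\xi\in\mathcal{G}_\mu$, the functions $\{J^\xi,f\}=X_\xi f$ and $\{J^\xi,h\}=X_\xi h$ both vanish identically on $N_\mu$. Jacobi gives
\[
X_\xi\{f,h\} = \{\{J^\xi,f\},h\} + \{f,\{J^\xi,h\}\}.
\]
To conclude that this vanishes on $N_\mu$, I invoke the previous step: since $df$ and $dh$ annihilate $E_\mu$ at points of $N_\mu$, the Hamiltonian vector fields $P^\sharp(df)$ and $P^\sharp(dh)$ are tangent to $N_\mu$ there, so differentiating the $N_\mu$-vanishing functions $\{J^\xi,f\}$ and $\{J^\xi,h\}$ along them gives zero on $N_\mu$. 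This shows $\{f,h\}\in C^\infty(M)_{E_\mu\cap TN_\mu}$ and completes the verification of the hypotheses of Proposition~\ref{prop_Fal_Zam}. The reduced Poisson structure $P_\mu$ on $Q_\mu$ furnished by that proposition is then exactly the one characterized by the displayed formula in the statement.
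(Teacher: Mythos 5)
Your proposal is correct and follows essentially the same route as the paper: the paper treats this proposition precisely as the special case of Proposition~\ref{prop_Fal_Zam} with $N=N_\mu$, $E=E_\mu$, noting that $(E_\mu\cap TN_\mu)_p=T_p(G_\mu\cdot p)$ gives the simple foliation and that canonicity of the action together with $E_\mu=P^\sharp\bigl((TN_\mu)^0\bigr)$ yields the two remaining hypotheses. You simply supply the details the paper leaves as a sketch (the equivariance computation, the tangency of $P^\sharp(E_\mu^0)$, and the Jacobi-identity argument for the bracket condition), all of which are sound.
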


In this case, one proves that $(E_\mu\cap TN_\mu)_p=T_p(G_\mu\cdot
p)$, for all $p\in N_\mu$. Therefore, the distribution $E_\mu\cap
TN_\mu$ is integrable and the leaves of the foliation it
determines are the $G_\mu$-orbits in $N_\mu$. The set of leaves is
the manifold $Q_\mu$ and the canonical projection $\pi_\mu$ is a
submersion. The canonicity of the action and the fact that
$E_\mu=P^{\sharp}\left((TN_{\mu})^0\right)$ holds, ensure the two
remaining conditions of Proposition \ref{prop_Fal_Zam} are also
satisfied.

\subsection{Extension to the Poisson quasi-Nijenhuis manifolds with background}

Poisson reduction can be used as a base for reducing any manifold
of Poisson type by adding the conditions which are
 needed to reduce, in an appropriate way, the additional structure. In
particular, Marsden-Ratiu Poisson reduction was used by Vaisman in
\cite{bib14} for proving a reduction procedure for
Poisson-Nijenhuis manifolds. In \cite{bib14}, Poisson-Nijenhuis
reduction by a group action was also derived. In the sequel, we
will  extend these results to the case of Poisson quasi-Nijenhuis
manifolds with background, using the more general Falceto-Zambon
reduction procedure.

\begin{definition}\label{def1.8}
Let $(M,P,A,\phi,H)$ be a Poisson quasi-Nijenhuis manifold with
background, $i_N : N \subset M$ a submanifold of $M$, and $E$ a
vector subbundle of $TM|_N$ such that assumptions of Definition
\ref{def1.5} are satisfied. We say that $(M,P,A,\phi,H)$ is
reducible if there exists a Poisson quasi-Nijenhuis structure with
background $(P',A',\phi',H')$ on the reduced manifold $Q$, such that the tensors
$P',A',\phi',H'$ are the projections of $P,A,\phi,H$ on $Q$, i.e.
$P$ and $P'$ are related by equation (\ref{eq10}), $A$ and $A'$
are related by \bgeq \label{eq13.5} d\pi\circ A|_{TN} = A' \circ
d\pi \,, \edeq where it is assumed that $A|_{TN}$ is well defined,
i.e. that $A(TN)\subset TN$, and $\phi,\phi'$ and $H,H'$ are
related by
\begin{eqnarray}
& & i_N^*\phi = \pi^*\phi' \,,\label{eq14.1}\\
& & i_N^*H = \pi^*H' \,.\label{eq14.2}
\end{eqnarray}
\end{definition}

Next theorem gives sufficient conditions for such a reduction to
occur.
\begin{theorem}\label{theorem1}
Let $(M,P,A,\phi,H)$ be a Poisson quasi-Nijenhuis manifold with
background, $i_N:N\subset M$ a submanifold of $M$ and $E$ a
vector subbundle of $TM|_N$ as in Definition \ref{def1.5}. Assume that:
\begin{itemize}
\item[i)]$\{ C^\infty(M)_E, C^\infty(M)_E \} \subset C^\infty(M)_{E \cap TN} $; \item[ii)] $P^\sharp(E^0)\subset TN$;
\item[iii)] $A(TN)\subset TN$, $A(E)\subset E$ and $A|_{TN}$ sends
projectable vector fields to projectable vector fields; \item[iv)]
$i_N^*\left(\imath_X\phi\right)=0=i_N^*\left(\imath_X H\right)$,
for all $X\in \X^1(M)$ such that $X|_ N \in \Gamma(E)$.
\end{itemize}
Then, the tensors $P,A,\phi,H$ project to tensors $P',A',\phi',H'$
on $Q$, respectively, and $(Q,P',A',\phi',H')$ is a Poisson
quasi-Nijenhuis manifold with background.
\end{theorem}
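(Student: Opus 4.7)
The plan is to obtain the four reduced tensors one at a time and then verify that they satisfy the axioms of Definition~\ref{def1}. First, assumptions i) and ii) are precisely those of Proposition~\ref{prop_Fal_Zam}, so $P$ descends to a Poisson bivector $P'$ on $Q$ characterized by equations \eqref{eq11.5} and \eqref{eq11.6} of Lemma~\ref{theorem0.75}. For the $(1,1)$-tensor, hypothesis iii) gives $A(TN)\subset TN$, and $A(E)\subset E$ combined with $A(TN)\subset TN$ forces $A(E\cap TN)\subset E\cap TN$; together with the fact that $A|_{TN}$ sends projectable vector fields to projectable ones, this ensures that $\pi_*(AX)$ is well-defined and independent of the choice of projectable lift $X$ of any $\bar X\in\X^1(Q)$, so \eqref{eq13.5} determines a $(1,1)$-tensor $A'$ on $Q$. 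For the three-forms, hypothesis iv) gives $\imath_Y(i_N^*\phi)=\imath_Y(i_N^*H)=0$ for every $Y\in\Gamma(E\cap TN)$, after extending $Y$ locally to $X\in\X^1(M)$ with $X|_N\in\Gamma(E)$; since $\phi$ and $H$ are closed, Cartan's formula then gives $\mathcal{L}_Y(i_N^*\phi)=\mathcal{L}_Y(i_N^*H)=0$, so both pull-backs are basic with respect to the simple foliation on $N$ and descend to closed three-forms $\phi',H'$ on $Q$ satisfying \eqref{eq14.1} and \eqref{eq14.2}.

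It remains to verify the four identities \eqref{eq2.1}--\eqref{eq2.4} for $(P',A',\phi',H')$. Each is reduced to its analogue on $M$ by evaluating on projectable lifts and pulling back via $i_N$. Relation \eqref{eq2.1} follows from $A\circ P^\sharp=P^\sharp\circ A^t$ combined with \eqref{eq11.6}, after noting that $A(E)\subset E$ implies $A^t(E^0)\subset E^0$, so lifts of one-forms on $Q$ that vanish on $E$ are sent by $A^t$ to lifts of the same type. For \eqref{eq2.3}, the commutation of $d\pi$ with brackets of projectable vector fields yields that $\mathcal{N}_A(X,Y)|_N$ is projectable with $\pi$-projection $\mathcal{N}_{A'}(\bar X,\bar Y)$, and the right-hand side is transported in the same way using $i_N^*\phi=\pi^*\phi'$ and $i_N^*H=\pi^*H'$. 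For \eqref{eq2.4}, the three-form $\mathcal{H}$ of \eqref{eq3} satisfies $i_N^*\mathcal{H}=\pi^*\mathcal{H}'$ because $A$ preserves $TN$ and descends to $A'$; then $d_{A'}\phi'=d\mathcal{H}'$ follows from $d_A\phi=d\mathcal{H}$ by the naturality of $d$, $\imath_A$ and pull-back.

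The main obstacle is the concomitant identity \eqref{eq2.2}, since the bracket $([\,,]^A)_P$ of one-forms involves Lie derivatives along Hamiltonian vector fields $P^\sharp\alpha$ and the deformed differential $d_A$, and these do not interact trivially with arbitrary extensions of one-forms from $Q$ to $M$. To handle it, I would fix $\lambda,\eta\in\Omega^1(Q)$, choose extensions $\widetilde{\pi^*\lambda},\widetilde{\pi^*\eta}\in\Omega^1(M)$ vanishing on $E$ as in Lemma~\ref{theorem0.75}, expand $\mathcal{C}_{P',A'}(\lambda,\eta)$ via \eqref{eq1.8}, and rewrite each of its six terms through the relations between $(P,A)$ and $(P',A')$. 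This reduces matters to the identity $\mathcal{C}_{P,A}(\widetilde{\pi^*\lambda},\widetilde{\pi^*\eta})=-\imath_{P^\sharp\widetilde{\pi^*\lambda}\wedge P^\sharp\widetilde{\pi^*\eta}}H$ modulo one-forms vanishing on $TN+E$. The bookkeeping required to keep all correction terms inside this ideal — using condition i), the inclusion $A^t(E^0)\subset E^0$, and $P^\sharp(E^0)\subset TN$ — is the technical heart of the proof and is exactly where the lemmas deferred to the appendix are needed.
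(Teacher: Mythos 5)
Your proposal is correct and follows essentially the same route as the paper: Falceto--Zambon reduction for $P$, projection of $A$, $\phi$, $H$ from hypotheses (iii)--(iv) together with closedness, and verification of \eqref{eq2.1}--\eqref{eq2.4} by evaluating on projectable lifts and on extensions of one-forms vanishing on $E$, with the concomitant identity handled exactly as in the paper's Lemma~\ref{theorem1.1} by expanding \eqref{eq1.8} term by term. The only slip is organizational: the technical lemmas needed here (Lemmas~\ref{theorem1.1} and~\ref{theorem1.2}) appear in the main text immediately after the theorem, whereas the appendix lemmas concern gauge transformations, not reduction.
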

\begin{proof}
 Lemmas \ref{theorem1.1} and \ref{theorem1.2} are needed. They are presented just after this proof.
We will prove the existence of the projections $P',A',\phi',H'$
satisfying all the conditions \eqref{eq2.1}-\eqref{eq2.4}. We will
denote by $X,Y$
 arbitrary vector fields on $N$ which are projectable
to vector fields $X'=\pi_*X$, $Y'=\pi_*Y$ on $Q$ and
$\tilde{X},\tilde{Y}\in \X^1(M)$ will be arbitrary extensions of
$X,Y$.

From Proposition \ref{prop_Fal_Zam}, we know that $P$ projects to $P'$.
As for the tensor $A$, since $A(TN)\subset TN$, we can consider
the $(1,1)$-tensor $A|_{TN}:TN\to TN$. Also, since $A|_{TN}$ sends
projectable vector fields to projectable vector fields and
$A(E\cap TN)\subset E\cap TN$, there exists a unique
$(1,1)$-tensor $A':TQ\to TQ$ satisfying $
 d\pi\circ A|_{TN} = A'\circ d\pi$.
Take now $\lambda,\eta\in \Omega^1(Q)$ and let
$\widetilde{\pi^*\lambda},\widetilde{\pi^*\eta}\in \Omega^1(M)$ be
any extensions of $\pi^*\lambda$, $\pi^*\eta$ vanishing on $E$.
Since $A(E)\subset E$, $\widetilde{\pi^*\lambda}\circ A$ and
$\widetilde{\pi^*\eta}\circ A$ are extensions of
$\pi^*(\lambda\circ A')$ and $\pi^*(\eta\circ A')$ vanishing on
$E$. Therefore, from equation \eqref{eq11.5} and the fact that $P$
and $A$ satisfy \eqref{eq2.1}, we have that \bgdisp
P'(\lambda\circ A',\eta)\circ \pi=P(\widetilde{\pi^*\lambda}\circ
A,\widetilde{\pi^*\eta})\circ
i_N=P(\widetilde{\pi^*\lambda},\widetilde{\pi^*\eta}\circ A)\circ
i_N = P'(\lambda,\eta\circ A')\circ \pi\,, \eddisp and hence $P'$
and $A'$ also satisfy \eqref{eq2.1}.

From assumption (iv) and the fact of $\phi$ and $H$ being closed,
we conclude that these forms are both projectable to closed
$3$-forms $\phi'$ and $H'$ on $Q$ defined by \eqref{eq14.1} and
\eqref{eq14.2}. Moreover, an easy computation gives
\begin{eqnarray}
& & i_N^*\left(\imath_{\tilde{X}\wedge \tilde{Y}} \phi\right)=\pi^*\left(\imath_{X'\wedge Y'} \phi'\right)  \,,\label{eq14.3}\\
& &  i_N^*\left(\imath_{\tilde{X}\wedge \tilde{Y}}
H\right)=\pi^*\left(\imath_{X'\wedge Y'} H'\right)
\,.\label{eq14.35}
\end{eqnarray}

Using Lemma \ref{theorem1.1} and equations \eqref{eq14.35} and
\eqref{eq11.6}, we get
\begin{eqnarray}
 \pi^*\left(\mathcal{C}_{P',A'}(\lambda,\eta)\right)&=& i_N^*\left(\mathcal{C}_{P,A}(\widetilde{\pi^*\lambda},\widetilde{\pi^*\eta})\right) = i_N^*\left(-\imath_{P^{\sharp}(\widetilde{\pi^*\lambda})\wedge P^{\sharp}(\widetilde{\pi^*\eta})}H \right) \nonumber\\
&=&\pi^*\left(-\imath_{P'^{\sharp}\lambda\wedge
P'^{\sharp}\eta}H'\right)\,,\nonumber
\end{eqnarray}
and since  $\pi^*$ is injective, we conclude that the concomitant of $P'$ and $A'$ satisfies \eqref{eq2.2}.

We will now compute the torsion of $A'$. From \eqref{eq13.5}, we
easily  get  \bgdisp  \mathcal{N}_{A'}(X',Y')\circ \pi=d\pi\circ
\mathcal{N}_{A|_{TN}}(X,Y) \,. \eddisp Moreover, since $di_N\circ
A|_{TN}=A\circ di_N$,  we have \bgdisp
\mathcal{N}_{A}(\tilde{X},\tilde{Y})\circ i_N=di_N\circ
\mathcal{N}_{A|_{TN}}(X,Y) \,, \eddisp and therefore
\begin{eqnarray}\label{eq14.6}
\mathcal{N}_{A'}(X',Y')\circ \pi & = & d\pi \circ \left.\left(\mathcal{N}_{A}(\tilde{X},\tilde{Y})\right)\right|_{N} \nonumber \\
             & = & d\pi \circ \left.\left(P^{\sharp}(\imath_{\tilde{X}\wedge \tilde{Y}} \phi + \imath_{A\tilde{X}\wedge \tilde{Y}}H + \imath_{\tilde{X}\wedge A\tilde{Y}}H )\right)\right|_{N} \,.
\end{eqnarray}
Notice that the vector field $P^{\sharp}(\imath_{\tilde{X}\wedge
\tilde{Y}} \phi + \imath_{A\tilde{X}\wedge \tilde{Y}}H +
\imath_{\tilde{X}\wedge A\tilde{Y}}H )$, on $M$, is tangent to
$N$. This is a direct consequence of assumptions (ii) and (iv).
Now, from \eqref{eq14.3} and \eqref{eq14.35}, and noticing that
$A\tilde{X},A\tilde{Y}$ are extensions of $A|_{TN}X,A|_{TN}Y$ and
that these last ones project to $A'X',A'Y'$, the $1$-forms in
\eqref{eq14.6} are extensions of $\pi^*\left(\imath_{X'\wedge Y'}
\phi'\right)$, $\pi^*\left(\imath_{A'X'\wedge Y'} H'\right)$,
$\pi^*\left(\imath_{X'\wedge A'Y'} H'\right)$, that vanish on $E$.
Therefore, we can use equation \eqref{eq11.6} to write
\eqref{eq14.6} as \bgdisp
\mathcal{N}_{A'}(X',Y')=P'^{\sharp}(\imath_{X'\wedge Y'} \phi' +
\imath_{A'X'\wedge Y'} H' + \imath_{X'\wedge A'Y'} H') \,, \eddisp
which is equation \eqref{eq2.3}.

It remains to check \eqref{eq2.4}. From \eqref{eq1.6} and the fact
of $\phi$ and $\phi'$ being closed, we have
$d_A\phi=-d\imath_A\phi$ and $d_{A'}\phi'=-d\imath_{A'}\phi'$, and
so, from Lemma \ref{theorem1.2}, we get
$$\pi^*( d_{A'}\phi')  =  - d(\pi^*(\imath_{A'}\phi'))= - d(i_N^*(\imath_A\phi))=i_N^*d\mathcal{H}
 = \pi^*d\mathcal{H}'.$$
This completes the proof of the theorem.
\end{proof}

\begin{lemma}\label{theorem1.1}
Let $P$ be a Poisson bivector on $M$ and $A:TM\to TM$ a
$(1,1)$-tensor. Let moreover $i_N:N\subset M$ be a submanifold of
$M$ and $E$ a vector subbundle of $TM|_N$  such that conditions i), ii) and iii)
 of  Theorem \ref{theorem1} are satisfied, so
that $P$ projects to a Poisson bivector $P'$ on $Q$ and $A$ to a
$(1,1)$-tensor $A':TQ\to TQ$. Then, \bgeq \label{eq14.8}
\pi^*\left(\mathcal{C}_{P',A'}(\lambda,\eta)\right) =
i_N^*\left(\mathcal{C}_{P,A}(\widetilde{\pi^*\lambda},\widetilde{\pi^*\eta})\right)
\,, \edeq for all $\lambda,\eta\in \Omega^1(Q)$ and any extensions
$\widetilde{\pi^*\lambda},\widetilde{\pi^*\eta}\in \Omega^1(M)$ of
$\pi^*\lambda,\pi^*\eta$ vanishing on $E$.
\end{lemma}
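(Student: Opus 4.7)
The plan is to verify \eqref{eq14.8} by applying $i_N^*$ to the explicit formula \eqref{eq1.8} for $\mathcal{C}_{P,A}(\widetilde{\pi^*\lambda},\widetilde{\pi^*\eta})$ and matching, term by term, with $\pi^*$ of the corresponding summands of $\mathcal{C}_{P',A'}(\lambda,\eta)$. So I would rewrite
\begin{eqnarray}
\mathcal{C}_{P,A}(\widetilde{\pi^*\lambda},\widetilde{\pi^*\eta}) & = &
\mathcal{L}_{P^\sharp\widetilde{\pi^*\eta}}(A^t\widetilde{\pi^*\lambda})
-\mathcal{L}_{P^\sharp\widetilde{\pi^*\lambda}}(A^t\widetilde{\pi^*\eta})
+ A^t\mathcal{L}_{P^\sharp\widetilde{\pi^*\lambda}}(\widetilde{\pi^*\eta}) \nonumber\\
& & - A^t\mathcal{L}_{P^\sharp\widetilde{\pi^*\eta}}(\widetilde{\pi^*\lambda})
+ d\bigl(P(A^t\widetilde{\pi^*\lambda},\widetilde{\pi^*\eta})\bigr)
- A^t d\bigl(P(\widetilde{\pi^*\lambda},\widetilde{\pi^*\eta})\bigr)\nonumber
\end{eqnarray}
and analogously for $\mathcal{C}_{P',A'}(\lambda,\eta)$, then process the six summands separately.

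The crucial preparatory observation, which I would establish up front, is that $A^t\widetilde{\pi^*\lambda}$ is itself an extension of $\pi^*(A'^t\lambda)$ vanishing on $E$ (and similarly with $\eta$ in place of $\lambda$). Vanishing on $E$ follows from $A(E)\subset E$ in assumption (iii); the extension property comes from \eqref{eq13.5}, since for $X\in TN$
\bgdisp
(A^t\widetilde{\pi^*\lambda})(X)=\widetilde{\pi^*\lambda}(A|_{TN}X)
=\lambda(A'\,d\pi X)=\pi^*(A'^t\lambda)(X).
\eddisp
Combined with assumption (ii) and Lemma \ref{theorem0.75}, this guarantees that $P^\sharp\widetilde{\pi^*\eta}$ and $P^\sharp(A^t\widetilde{\pi^*\lambda})$ are tangent to $N$ and are $\pi$-related, respectively, to $P'^\sharp\eta$ and $P'^\sharp(A'^t\lambda)$, so that all the hypotheses needed to invoke \eqref{eq11.5} and \eqref{eq11.6} are met.

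The six summands are then handled using three elementary commutations: $i_N^*\circ d=d\circ i_N^*$; the identity $i_N^*\imath_Y=\imath_{Y|_N}i_N^*$, valid for $Y$ tangent to $N$; and $\imath_{Y|_N}\pi^*=\pi^*\imath_{Y'}$ when $Y|_N$ is $\pi$-related to $Y'$. The two $d(P(\cdots))$ summands reduce directly to \eqref{eq11.5}. The four Lie derivatives are split via Cartan's formula $\mathcal{L}_Z=d\imath_Z+\imath_Z d$: the $d\imath_Z$ pieces reduce again to \eqref{eq11.5}, while the $\imath_Z d$ pieces become contractions of a $\pi^*$ form by a projectable tangent field, which project by the contraction identity. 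The two summands that carry an outer $A^t$ need one further remark, that $i_N^*\circ A^t=(A|_{TN})^t\circ i_N^*$ and $(A|_{TN})^t\circ \pi^*=\pi^*\circ A'^t$; both follow from $A(TN)\subset TN$ and \eqref{eq13.5}.

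Conceptually there is no hard step; the difficulty is purely bookkeeping. The main obstacle, which the preparatory observation is designed to neutralise, is ensuring that at each stage the 1-form whose behaviour under $i_N^*$ and $\pi^*$ we track is genuinely a legitimate extension vanishing on $E$ of the corresponding object on $Q$, so that Lemma \ref{theorem0.75} applies without modification.
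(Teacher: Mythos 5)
Your proposal is correct and follows essentially the same route as the paper's proof: both expand $\mathcal{C}_{P,A}$ via \eqref{eq1.8}, hinge on the observation that $A^t\widetilde{\pi^*\lambda}$ is an extension of $\pi^*(A'^t\lambda)$ vanishing on $E$ (via $A(E)\subset E$ and \eqref{eq13.5}), and then apply Lemma \ref{theorem0.75} term by term. The only difference is organizational — the paper evaluates on a projectable vector field and groups the summands as $d(\cdot)(\cdot,\cdot)$ expressions, while you keep the Lie-derivative form and split with Cartan's formula — which amounts to the same bookkeeping.
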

\begin{proof}
Take any projectable vector field $X\in \X^1(N)$ and set
$X'=\pi_*X$. Using equation \eqref{eq11.5}, we get \bgdisp
d\left(P'(\lambda,\eta)\right)(A'X')\circ\pi  =
d\left(P(\widetilde{\pi^*\lambda},\widetilde{\pi^*\eta})\right)(Adi_NX)\circ
i_N , \eddisp and \bgdisp
d\left(P'(\lambda,A'^t\eta)\right)(X')\circ\pi  =
d\left(P(\widetilde{\pi^*\lambda},A^t\widetilde{\pi^*\eta})\right)(di_NX)\circ
i_N  \,, \eddisp where, in the last equality, we used the fact
that $\pi^*(A'^t\eta)=i_N^*(A^t\widetilde{\pi^*\eta})$, which is easily seen to be equivalent
to equation \eqref{eq13.5}. Moreover, using equation
\eqref{eq11.6}, we have
\begin{eqnarray*}
 d(A'^t\lambda)(P'^{\sharp}\eta,X')\circ \pi & = & d(\pi^*(A'^t\lambda))(P^{\sharp}(\widetilde{\pi^*\eta}),X)\\
& = & d(i_N^*(A^t\widetilde{\pi^*\lambda}))(P^{\sharp}(\widetilde{\pi^*\eta}),X)  \\
& = &
d(A^t\widetilde{\pi^*\lambda})(P^{\sharp}(\widetilde{\pi^*\eta}),di_NX)\circ
i_N,
\end{eqnarray*}
and, by a similar reasoning, \bgdisp
d\eta(P'^{\sharp}\lambda,A'X')\circ\pi=d(\widetilde{\pi^*\eta})(P^{\sharp}(\widetilde{\pi^*\lambda}),Adi_NX)\circ
i_N . \eddisp  Therefore,
from \eqref{eq1.8}, we obtain
\begin{eqnarray}
&& \pi^*\left(\mathcal{C}_{P',A'}(\lambda,\eta)\right)(X)= \nonumber \\
&= & d(A'^t\lambda)(P'^{\sharp}\eta,X')\circ \pi - d(A'^t\eta)(P'^{\sharp}\lambda,X')\circ\pi + d\eta(P'^{\sharp}\lambda,A'X')\circ\pi\nonumber\\
& & - d\lambda(P'^{\sharp}\eta,A'X')\circ \pi - d\left(P'(\lambda,A'^t\eta)\right)(X')\circ\pi + d\left(P'(\lambda,\eta)\right)(A'X')\circ\pi \nonumber\\
& = &
i_N^*\left(\mathcal{C}_{P,A}(\widetilde{\pi^*\lambda},\widetilde{\pi^*\eta})\right)(X)
\nonumber \,,
\end{eqnarray}
which proves \eqref{eq14.8}.
\end{proof}

\begin{lemma}\label{theorem1.2}
Let $i_N:N\subset M$ be a submanifold of $M$, $\pi:N\to Q$ a
submersion onto a manifold $Q$, $\phi$ and $H$ closed $3$-forms on
$M$ and $\displaystyle{A:TM\to TM}$ a $(1,1)$-tensor satisfying
$A(TN)\subset TN$. Suppose that $\phi$, $H$ and $A$ are
projectable by $\pi$, i.e. there exist tensors $\phi'$, $H'$ and
$A'$ on $Q$ satisfying equations \eqref{eq13.5}, \eqref{eq14.1}
and \eqref{eq14.2}. Then,  \bgeq\label{eq15} i_N^*(\imath_{A}\phi)
= \pi^*(\imath_{A'}\phi')  \edeq and \bgeq\label{eq15.5}
i_N^*\mathcal{H} = \pi^*\mathcal{H}' \,, \edeq where $\mathcal{H}$
is given by equation \eqref{eq3} and $\mathcal{H}'$ is given by
the same equation with $H'$ and $A'$.
\end{lemma}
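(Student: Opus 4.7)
The plan is to unfold the definitions of $\imath_A\phi$ and of $\mathcal{H}$, evaluate each side of the two claimed equalities on triples of vector fields tangent to $N$, and then use the three projectability hypotheses in turn: $i_N^*\phi=\pi^*\phi'$, $i_N^*H=\pi^*H'$, and $d\pi\circ A|_{TN}=A'\circ d\pi$. The crucial structural input is the assumption $A(TN)\subset TN$, which guarantees that after inserting $A$ we are still evaluating $\phi$ (resp.\ $H$) on vectors tangent to $N$, so the pullback identities $i_N^*\phi=\pi^*\phi'$ and $i_N^*H=\pi^*H'$ can legitimately be applied.

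For \eqref{eq15}, I would fix $X_1,X_2,X_3\in\Gamma(TN)$ and write
\[
(i_N^*\imath_A\phi)(X_1,X_2,X_3)=\phi(A\,di_N X_1,di_N X_2,di_N X_3)+\text{(two analogous terms)},
\]
by the definition of $\imath_A$. Since $A(TN)\subset TN$, each argument $A\,di_N X_i$ equals $di_N(A|_{TN}X_i)$, so every term becomes an evaluation of $\phi$ on three vectors in $di_N(TN)$. Applying $i_N^*\phi=\pi^*\phi'$ rewrites each term as $\phi'(d\pi(AX_1),d\pi X_2,d\pi X_3)$ and its cyclic partners, and then \eqref{eq13.5} replaces $d\pi(AX_i)$ by $A'd\pi X_i$. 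Summing gives precisely $(\imath_{A'}\phi')(d\pi X_1,d\pi X_2,d\pi X_3)=(\pi^*\imath_{A'}\phi')(X_1,X_2,X_3)$.

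For \eqref{eq15.5}, the reasoning is entirely parallel: expand
\[
\mathcal{H}(X,Y,Z)=H(AX,AY,Z)+H(AY,AZ,X)+H(AZ,AX,Y)
\]
on $X,Y,Z\in\Gamma(TN)$. Again $A(TN)\subset TN$ keeps every argument tangent to $N$, so $i_N^*H=\pi^*H'$ rewrites each term as a value of $H'$ on the $d\pi$-images; \eqref{eq13.5} converts $d\pi(AX),d\pi(AY)$ into $A'd\pi X,A'd\pi Y$; and the cyclic sum is recognized as $\mathcal{H}'(d\pi X,d\pi Y,d\pi Z)=(\pi^*\mathcal{H}')(X,Y,Z)$.

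Both computations are essentially a bookkeeping exercise, and I do not foresee a real obstacle: the only point requiring mild care is the transition $A\circ di_N=di_N\circ A|_{TN}$, which is just the hypothesis $A(TN)\subset TN$ rephrased, and the verification that the equalities hold as forms on $N$ follows because the two sides agree on all triples of vectors in $TN$ (and both sides are pullbacks to $N$, so pointwise equality on $TN$ determines them).
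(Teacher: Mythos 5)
Your proof is correct and is precisely the "straightforward computation" that the paper omits: the paper's own proof of this lemma consists only of the sentence that it is a straightforward computation, and your argument supplies exactly the intended details. The key points you identify --- that $A(TN)\subset TN$ keeps all arguments tangent to $N$ so the pullback identities apply, and that \eqref{eq13.5} converts $d\pi\circ A|_{TN}$ into $A'\circ d\pi$ --- are the right ones, and evaluating on triples of vectors in $TN$ suffices since both sides are forms on $N$.
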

\begin{proof}
It is a straightforward computation.
\end{proof}
When $H=0$, Theorem \ref{theorem1} gives a reduction procedure for
Poisson quasi-Nijenhuis manifolds. If, moreover, $\phi=0$, we get
a reduction theorem for Poisson-Nijenhuis manifolds which is a
slightly more general version of the one derived in \cite{bib14}.

Now we will consider the special case of reduction by symmetries.

\begin{proposition} \label{theorem2}
Let $(M,P,G,J,\mu,N_\mu, E_\mu, Q_\mu,P_\mu)$ be as in Proposition
\ref{theorem0.7}. Let also $A:TM \to TM$ be a $(1,1)$-tensor and
$\phi$, $H$ closed $3$-forms on $M$ such that $(M,P,A,\phi,H)$ is
a Poisson quasi-Nijenhuis manifold with background and such that
the following conditions hold:
\begin{itemize}
\item[(a)]
 $dJ\circ A=dJ$ at any point of $N_\mu$;
\item[(b)] there exists an endomorphism $C$ of $\mathcal{G}$ such
that $A\tilde{\xi}=\widetilde{C\xi}$, for all $\xi\in
\mathcal{G}$, where $\tilde{\xi}$ denotes the fundamental vector
field on $M$ associated with $\xi$ by the action of $G$;
\item[(c)] $A$ is $G$-invariant, i.e. ${\mathcal L}_{\tilde{\xi}}A=0$, for
all $\xi\in \mathcal{G}$;
 \item[(d)]
$i_\mu^*(\imath_{\tilde{\xi}}\phi)=0=i_\mu^*(\imath_{\tilde{\xi}}H)$,
for all $\xi\in \mathcal{G}$.
\end{itemize}
Then, $(M,P,A,\phi,H)$ reduces to a Poisson quasi-Nijenhuis
manifold with background $(Q_\mu,P_\mu,A_\mu,\phi_\mu,H_\mu)$,
where $A_\mu$, $\phi_\mu$ and $H_\mu$ are the projections of $A$,
$\phi$ and $H$ on $Q_\mu$, respectively.
\end{proposition}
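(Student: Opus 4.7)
The plan is to derive this proposition as a specialization of Theorem \ref{theorem1}, with $N=N_\mu$, $E=E_\mu$, and $\pi=\pi_\mu$. Proposition \ref{theorem0.7} tells us that, in this setting, the quadruple $(M,P,N_\mu,E_\mu)$ satisfies all the hypotheses of Proposition \ref{prop_Fal_Zam}; in particular, conditions i) and ii) of Theorem \ref{theorem1} hold. Hence only conditions iii) and iv) remain to be checked, and the reduction will then follow automatically.

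To verify iii), I would exploit the three conditions (a), (b), (c) in turn. Since $N_\mu=J^{-1}(\mu)$, one has $T_pN_\mu=\ker(dJ)_p$ for $p\in N_\mu$; condition (a) gives $dJ(Av)=dJ(v)=0$ whenever $v\in T_pN_\mu$, which yields $A(TN_\mu)\subset TN_\mu$. For the inclusion $A(E_\mu)\subset E_\mu$, recall that $(E_\mu)_p$ is spanned by the values at $p$ of the fundamental vector fields $\tilde\xi$, $\xi\in\mathcal{G}$; by (b), $A\tilde\xi=\widetilde{C\xi}$ is again fundamental, so $A$ sends sections of $E_\mu$ to sections of $E_\mu$. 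For the projectability part, note that a vector field $X$ on $N_\mu$ projects through $\pi_\mu$ precisely when $\mathcal{L}_{\tilde\xi}X\in\Gamma(E_\mu\cap TN_\mu)$ for every $\xi\in\mathcal{G}_\mu$; then by (c),
\[
\mathcal{L}_{\tilde\xi}(AX)=(\mathcal{L}_{\tilde\xi}A)X+A(\mathcal{L}_{\tilde\xi}X)=A(\mathcal{L}_{\tilde\xi}X),
\]
and the right-hand side lies in $A(E_\mu\cap TN_\mu)\subset E_\mu\cap TN_\mu$ by the two previous points, so $AX$ is also projectable.

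To verify iv), I would use that $E_\mu$ is pointwise spanned by the fundamental vector fields of the $G$-action. Given any $X\in\X^1(M)$ with $X|_{N_\mu}\in\Gamma(E_\mu)$, one can write, locally on $N_\mu$, $X|_{N_\mu}=\sum_i f_i\tilde\xi_i|_{N_\mu}$ with $\xi_i\in\mathcal{G}$ and $f_i\in C^\infty(N_\mu)$. Since $i_\mu^*(\imath_X\phi)$ depends only on $X|_{N_\mu}$ (the remaining slots being filled with vectors tangent to $N_\mu$), $\mathcal{C}^\infty(N_\mu)$-linearity in $X$ together with (d) yields
\[
i_\mu^*(\imath_X\phi)=\sum_i f_i\,i_\mu^*(\imath_{\tilde\xi_i}\phi)=0,
\]
and similarly $i_\mu^*(\imath_XH)=0$. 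This is exactly iv).

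With i)--iv) verified, Theorem \ref{theorem1} applies and produces the PqNb structure $(P_\mu,A_\mu,\phi_\mu,H_\mu)$ on $Q_\mu$ with the stated projection properties. I expect the main subtle point to be the projectability claim in iii): one has to be careful that the $G$-invariance hypothesis in (c), stated as a global identity on $M$, does pass correctly to the restriction $A|_{TN_\mu}$, which is guaranteed because the $\tilde\xi$ with $\xi\in\mathcal{G}_\mu$ are tangent to $N_\mu$ and $A$ already preserves $TN_\mu$ by (a). Everything else is either a direct invocation of the hypotheses or a routine extension argument.
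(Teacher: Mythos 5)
Your proposal is correct and follows essentially the same route as the paper: reduce to checking conditions (iii) and (iv) of Theorem \ref{theorem1}, using (a) for $A(TN_\mu)\subset TN_\mu$ via $T_pN_\mu=\ker dJ(p)$, (b) for $A(E_\mu)\subset E_\mu$ via the fundamental vector fields spanning $E_\mu$, (c) for projectability, and (d) together with the pointwise description of $E_\mu$ for condition (iv). You merely spell out in more detail the projectability and tensoriality arguments that the paper leaves as immediate.
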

\begin{proof}
We only need to prove (iii) and (iv) of Theorem \ref{theorem1}.
 Since $T_pN_\mu={\rm ker}\,dJ(p)$,
$\forall p\in N_\mu$, condition (a) above implies that
$A(TN_\mu)\subset TN_\mu$. As for the inclusion $A(E_\mu)\subset
E_\mu$, it follows from (b) and the fact that \bgeq\label{eq15.7}
(E_\mu)_p=T_p(G\cdot p)=\{\tilde{\xi}(p):\xi \in \mathcal{G}\} \,,
\edeq for all $p\in N_\mu$. Moreover, condition (c) implies that
$A$ sends projectable vector fields to projectable vector fields,
and so condition (iii) of Theorem \ref{theorem1} holds. Finally,
that condition (d) implies condition (iv) of Theorem
\ref{theorem1} is an obvious consequence of equality
\eqref{eq15.7}.
\end{proof}
This result contains the group action reduction for
Poisson-Nijenhuis manifolds presented in \cite{bib14}, and gives
also a group action reduction for Poisson quasi-Nijenhuis
manifolds.

\subsection{Reduction of generalized complex manifolds with
  background}
Taking into account that a gc manifold with background is a
special case of a PqNb manifold, we can refine Theorem
\ref{theorem1} and construct a reduction procedure for  gc
manifolds with background as follows:
\begin{theorem}\label{theorem3}
Let $(M,\mathcal{J},H)$ be a generalized complex manifold with
 background, with $\mathcal{J}:=(A, P, \sigma)$ given by \eqref{eq8}, $i_N:N\subset M$ a submanifold of $M$ and $E$ a vector subbundle of $TM|_N$  as in Definition \ref{def1.5}. Suppose that conditions (i), (ii) and (iii) of Theorem \ref{theorem1} are satisfied and, moreover,
\begin{itemize}
 \item[(a)] $\sigma^{\flat}(TN)\subset E^0$;
\item[(b)] $i_N^*(\imath_Xd\sigma)=0=i_N^*(\imath_X H)$, for all
$X\in \X^1(M)$ such that $X|_ N \in \Gamma(E)$.
\end{itemize}
Then, the tensors $P,A,\sigma,H$ project to tensors
 $P',A',\sigma',H'$ on $Q$, respectively, and $(Q,\mathcal{J}',H')$ is a
 generalized complex manifold with background where $\mathcal{J}'$ is
 the vector bundle map determined by $P',A',\sigma'$ as in \eqref{eq8}.
\end{theorem}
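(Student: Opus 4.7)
The plan is to exploit Theorem \ref{theorem0.9}, which identifies a gc structure with background $H$ with a PqNb structure $(P,A,d\sigma,H)$ satisfying conditions (4) and (5) of Theorem \ref{theorem0.8}. Setting $\phi=d\sigma$, hypothesis (b) of this theorem is precisely hypothesis (iv) of Theorem \ref{theorem1}; combined with (i)--(iii), Theorem \ref{theorem1} immediately delivers a PqNb structure $(P',A',(d\sigma)',H')$ on $Q$ obtained by projecting $P$, $A$, $d\sigma$, and $H$.

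It then remains to project $\sigma$ itself and check that the resulting data defines a gc structure with background. To project $\sigma$, I would show that $i_N^*\sigma$ is basic with respect to the foliation integrating $E\cap TN$. Condition (a) gives $\sigma^{\flat}(TN)\subset E^0$, which in particular yields $\imath_Y(i_N^*\sigma)=0$ for every $Y\in\Gamma(E\cap TN)$. Cartan's formula together with (b) then gives $\mathcal{L}_Y(i_N^*\sigma)=\imath_Y d(i_N^*\sigma)=i_N^*(\imath_{\tilde Y}d\sigma)=0$ for any extension $\tilde Y$ of $Y$ with $\tilde Y|_N\in\Gamma(E)$. Hence there exists $\sigma'\in\Omega^2(Q)$ with $\pi^*\sigma'=i_N^*\sigma$, and pulling $d$ through this identity shows $d\sigma'=(d\sigma)'$.

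The final step is to verify conditions (4) and (5) of Theorem \ref{theorem0.8} for $\mathcal{J}'=(A',P',\sigma')$. For (4), a direct computation on projectable vector fields (using $\pi^*\sigma'=i_N^*\sigma$ and $A'\circ d\pi=d\pi\circ A|_{TN}$) yields $\pi^*\sigma'_{A'}=i_N^*\sigma_A$, which transfers antisymmetry of $\sigma_A$ to $\sigma'_{A'}$ by injectivity of $\pi^*$. Applying $d$ to the same identity and invoking Lemma \ref{theorem1.2} to handle the terms $\imath_{A'}d\sigma'$ and $\mathcal{H}'$, the pullback of $d\sigma'_{A'}+H'-\imath_{A'}d\sigma'-\mathcal{H}'$ equals $i_N^*$ of the corresponding expression on $M$, which vanishes by \eqref{eq9}; injectivity of $\pi^*$ then gives the analogous equation on $Q$. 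For (5), observe first that $A^2(TN)\subset TN$: by (iii) $-X$ stays in $TN$, and (a) combined with (ii) gives $P^{\sharp}\sigma^{\flat}(TN)\subset P^{\sharp}(E^0)\subset TN$. For any projectable $X\in\X^1(N)$ with $\pi_*X=X'$, one computes $A'^2X'\circ\pi=d\pi\circ A^2X=-X'\circ\pi-d\pi\circ P^{\sharp}\sigma^{\flat}X$; condition (a) ensures that $\sigma^{\flat}\tilde X$ is an extension of $\pi^*(\sigma'^{\flat}X')$ vanishing on $E$ at points of $N$, so equation \eqref{eq11.6} rewrites the last term as $-P'^{\sharp}\sigma'^{\flat}X'\circ\pi$, giving $A'^2=-\mathrm{Id}-P'^{\sharp}\circ\sigma'^{\flat}$.

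The main obstacle, and the only genuinely new ingredient beyond Theorem \ref{theorem1}, is the direct descent of $\sigma$: it requires (a) and (b) to act jointly, the former producing the vertical-contraction vanishing and the latter (via Cartan) the Lie-derivative vanishing along $E\cap TN$. Once $\sigma'$ is in hand, the verification of (4) and (5) is a careful exercise in tracking which extensions vanish on $E$ so that the formulas of Lemma \ref{theorem0.75} and Lemma \ref{theorem1.2} can be applied and the resulting identities descended by injectivity of $\pi^*$.
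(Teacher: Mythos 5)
Your proposal is correct and follows essentially the same route as the paper: pass to the PqNb structure $(P,A,d\sigma,H)$ via Theorem \ref{theorem0.9}, apply Theorem \ref{theorem1}, descend $\sigma$ using (a) and (b), and then verify conditions (4) and (5) of Theorem \ref{theorem0.8} via the identity $i_N^*\sigma_A=\pi^*\sigma'_{A'}$, Lemma \ref{theorem1.2}, and equation \eqref{eq11.6}. Your Cartan-formula argument for the descent of $\sigma$ and the remark that $A^2(TN)\subset TN$ merely make explicit steps the paper leaves implicit.
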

\begin{proof}
By Theorem \ref{theorem0.9}, $(M,P,A,d\sigma,H)$ is a PqNb
manifold and properties (4) and (5) of Theorem \ref{theorem0.8}
hold. From Theorem \ref{theorem1} we get the
PqNb manifold $(Q,P',A',\phi',H')$, where $P',A',\phi',H'$ are the
projections of $P,A,d\sigma,H$, respectively. On the other hand,
from conditions (a) and (b) above, $\sigma$ projects to a
$2$-form $\sigma'$ on $Q$. Therefore, we have $\phi'=d\sigma'$ and
so the reduced PqNb manifold that we obtain is in fact
$(Q,P',A',d\sigma',H')$. It remains to show that the tensors $P',A',\sigma',H'$
 satisfy properties (4) and (5) of Theorem
\ref{theorem0.8}. We start by noticing that a simple computation gives \bgeq\label{eq16}
i_N^*\sigma_A=\pi^*\sigma'_{A'} \,. \edeq
Then, in particular, since $\sigma_A$ is antisymmetric,
$\sigma'_{A'}$ also is. Moreover, using  \eqref{eq16} and Lemma
\ref{theorem1.2}, we can write \bgdisp i_N^*\left(d\sigma_A + H
-\imath_Ad\sigma - \mathcal{H}
\right)=\pi^*\left(d\sigma'_{A'}+H'-\imath_{A'}d\sigma'-\mathcal{H}'
\right) \,, \eddisp and so property (4) of Theorem
\ref{theorem0.8} holds. Finally, given any projectable vector
field $X\in \X^1(N)$, we have
\begin{eqnarray}
A'^2(\pi_*X) & = & \pi_*((A|_{TN})^2X)=-\pi_*X - \pi_*(P^{\sharp}(\sigma^\flat(X)))\nonumber\\
           & = & -\pi_*X - P'^{\sharp}(\sigma'^\flat(\pi_*X)) \,,\nonumber
\end{eqnarray}
which proves (5) of Theorem \ref{theorem0.8}. In the last equality
above, we used equation \eqref{eq11.6}. In fact, $\sigma^\flat(X)$
is an extension of $\pi^*\left(\sigma'^\flat(\pi_*X) \right)$
which vanishes on $E$.
\end{proof}

When $H=0$, we recover a slightly more general version of the
reduction procedure for gc manifolds found by Vaisman in
\cite{bib2}.

Now, we will use Proposition \ref{theorem2} to construct a group
action reduction procedure for gc manifolds with background.
\begin{proposition} \label{theorem3.5}
Let $(M,P,G,J,\mu, N_\mu, Q_\mu,P_\mu)$ be as in Proposition \ref{theorem0.7}.
Let also $\sigma$ be a $2$-form, $A:TM\to TM$ a $(1,1)$-tensor and
$H$ a closed $3$-form on $M$, such that $(M,\mathcal{J},H)$ is a
generalized complex manifold with background, where the vector
bundle map $\mathcal{J}$ is  determined by $P,\sigma,A$, as in
\eqref{eq8}. Suppose that conditions (a), (b) and (c) of Proposition \ref{theorem2}
are satisfied and, moreover,
\begin{itemize}
 \item[(i)] the orbits of $G$ and the
level sets of the moment map $J$ are $\sigma$-orthogonal;
\item[(ii)]
$i_\mu^*(\imath_{\tilde{\xi}}d\sigma)=0=i_\mu^*(\imath_{\tilde{\xi}}H)$,
for all $\xi\in \mathcal{G}$. \end{itemize}
 Then, the tensors
$\sigma,A,H$ project to tensors $\sigma_\mu,A_\mu,H_\mu$ on
$Q_\mu$, respectively, and $(Q,\mathcal{J}_\mu,H_\mu)$ is a
generalized complex manifold with background, where the vector
bundle map $\mathcal{J}_\mu$ is determined by
$P_\mu,\sigma_\mu,A_\mu$, as in \eqref{eq8}.
\end{proposition}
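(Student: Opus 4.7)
The plan is to reduce the proof to an application of Theorem \ref{theorem3} (with $N=N_\mu$ and $E=E_\mu$), in the same way that Proposition \ref{theorem2} is obtained from Theorem \ref{theorem1}. Hypotheses (i)--(iii) of Theorem \ref{theorem1} are verified exactly as in the proof of Proposition \ref{theorem2}: (i) and (ii) follow from Proposition \ref{theorem0.7}, using $E_\mu=P^\sharp((TN_\mu)^0)$ and the canonicity of the action, while (iii) follows from hypotheses (a)--(c) of the present statement together with \eqref{eq15.7}. It therefore remains to check conditions (a) and (b) of Theorem \ref{theorem3}.

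For Theorem \ref{theorem3}(a), I would combine $T_pN_\mu=\ker dJ(p)$ with $(E_\mu)_p=\{\tilde\xi(p):\xi\in\mathcal{G}\}$ from \eqref{eq15.7}. The inclusion $\sigma^\flat(TN_\mu)\subset E_\mu^0$ is then equivalent to $\sigma(X,\tilde\xi)(p)=0$ for all $p\in N_\mu$, $X\in T_pN_\mu$ and $\xi\in\mathcal{G}$, which is precisely the $\sigma$-orthogonality assumption (i).

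For Theorem \ref{theorem3}(b), take $X\in\X^1(M)$ with $X|_{N_\mu}\in\Gamma(E_\mu)$. Since $(E_\mu)_p$ is spanned by the values at $p$ of the fundamental vector fields, one can write locally $X|_{N_\mu}=\sum_a f_a\,\tilde\xi_a|_{N_\mu}$ for smooth functions $f_a$ on $N_\mu$ and a basis $\{\xi_a\}$ of $\mathcal{G}$. Because $i_\mu^*(\imath_Y\alpha)$ depends on a vector field $Y\in\X^1(M)$ only through $Y|_{N_\mu}$, assumption (ii) yields
\[
i_\mu^*(\imath_X d\sigma)=\sum_a f_a\,i_\mu^*(\imath_{\tilde\xi_a}d\sigma)=0, \qquad i_\mu^*(\imath_X H)=\sum_a f_a\,i_\mu^*(\imath_{\tilde\xi_a}H)=0.
\]
This is the same mechanism used implicitly in the proof of Proposition \ref{theorem2} to upgrade condition (d) to condition (iv) of Theorem \ref{theorem1}.

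With (a) and (b) established, Theorem \ref{theorem3} applies and produces the projected tensors $P_\mu, A_\mu, \sigma_\mu, H_\mu$ together with the generalized complex structure with background $(Q_\mu,\mathcal{J}_\mu,H_\mu)$ whose vector bundle map $\mathcal{J}_\mu$ is built from $(P_\mu,\sigma_\mu,A_\mu)$ as in \eqref{eq8}. No serious obstacle is expected: the only minor technicality is the local decomposition of $X|_{N_\mu}$ into fundamental vector fields, which rests on \eqref{eq15.7} and a local choice of basis of $\mathcal{G}$, and is independent of whether the $\tilde\xi_a$ are pointwise linearly independent.
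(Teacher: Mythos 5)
Your proposal is correct and follows essentially the same route as the paper: the paper's proof simply notes that assumption (i) gives condition (a) of Theorem \ref{theorem3} and that all remaining hypotheses are verified exactly as in the proof of Proposition \ref{theorem2}. You have merely spelled out the pointwise translation of $\sigma$-orthogonality into $\sigma^{\flat}(TN_\mu)\subset E_\mu^0$ and the local decomposition of sections of $E_\mu$ into fundamental vector fields, both of which the paper leaves implicit.
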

\begin{proof}
Assumption (i) guarantees condition (a) of Theorem \ref{theorem3}. All the remaining conditions of that theorem are
satisfied (see the proof of Proposition \ref{theorem2}).
\end{proof}

\begin{remark}
{\rm There are several different approaches to reduction of
generalized complex structures (without background). In
\cite{bib3}, the reduction of a gc structure $\mathcal J$ on a
manifold $M$ is performed by the action of a Lie group $G$ on $M$.
The action should preserve $\mathcal J$ and a $G$-invariant
submanifold $N$ of $M$, where $G$ acts free and properly, is
taken. The authors obtain sufficient conditions to $\mathcal J$
descend to the quotient $N/G$. The procedure consists in reducing
the complex Dirac structures on $M$ that determine $\mathcal J$,
i.e. their $(\pm i)$-eigenbundles, to Dirac structures on $N/G$
that are going to define the reduced gc structure.

In \cite{bib5.5}, the reduction of gc structures (with background) is also based on
Dirac reduction, but with a different approach. Dirac reduction is
derived from an exact Courant algebroid reduction procedure which
involves the concept of an ``extended action" and its associated
moment map.

In \cite{bib5}, the authors introduce the concept of generalized
moment map for a compact Lie group action on a generalized complex
manifold and then use this notion to implement reduction, i.e. to
define a generalized complex structure on the reduced space. In an
appendix of the paper, this approach is extended to generalized
complex structures with background.

In \cite{Zambon}, the reduction of an exact Courant algebroid $E \to M$ is performed without any group action. Instead, a coisotropic subbundle $K \to C$ ($C$ is a submanifold of $M$) of the exact Courant algebroid $E$ is used to obtain a reduced exact Courant algebroid ${\underline E} \to {\underline C}$ . The author takes a gc structure ${\mathcal J}$ on the exact Courant algebroid $E$ and gives sufficient conditions on ${\mathcal J}$, $K$ and $C$ to ${\mathcal J}$ descend to a reduced gc structure $\underline{{\mathcal J}}$ on ${\underline E} \to {\underline C}$.
}
\end{remark}

\section{Gauge transformations of Poisson quasi-Nijenhuis structures
  with background}\label{gauge_transform}
\subsection{Definition}
An important concept in generalized complex geometry is that of
gauge transformation. As shown by Gualtieri \cite{bib17}, given a
closed $3$-form $H$ and a $2$-form $B$ on $M$, the mapping
\bgeq\label{eq20} \mathbf{B}: X + \alpha \mapsto X + \alpha +
\imath_XB \edeq is a vector bundle automorphism of ${\mathbb T}M$
which is compatible with Courant brackets with backgrounds $H$ and
$H+dB$, i.e. \bgeq\label{eq201} \mathbf{B}[X+\alpha,Y+\beta]_H =
       [\mathbf{B}(X+\alpha),\mathbf{B}(Y+\beta)]_{H+dB} \,.
\edeq

The mapping $\mathbf{B}$ is called a $B$-{\it field} or a {\it
gauge
  transformation} and its matrix representation is given by
  \begin{equation} \label{matrix_B}
  \mathbf{B}=\left(\begin{array}{cc}
{\rm Id} & 0 \\
  B^{\flat} & {\rm Id}
\end{array}\right)\,.
  \end{equation}
It acts on gc structures with background $H$ by the
invertible map $\mathcal{J}\mapsto
\mathbf{B}^{-1}\mathcal{J}\mathbf{B}$ and, as it was remarked in
\cite{bib17}, $\mathbf{B}^{-1}\mathcal{J}\mathbf{B}$ is a gc
structure with background $H+dB$. If $\mathcal{J}$ is given by
\eqref{eq8}, then \bgeq \label{eq21}
\mathbf{B}^{-1}\mathcal{J}\mathbf{B}= \left(\begin{array}{cc}
A + P^{\sharp}B^{\flat} & P^{\sharp} \\
 \sigma^{\flat}-B^{\flat}P^{\sharp}B^{\flat}-B^{\flat}A - A^tB^{\flat}  & -A^t - B^{\flat}P^{\sharp}
\end{array}\right)\,,
\edeq so that the Poisson bivector $P$ is preserved, the
$(1,1)$-tensor $A$ is replaced by $A + P^{\sharp}B^{\flat}$, and
the $2$-form $\sigma$ goes to the $2$-form $\tilde{\sigma}$ given
by \bgeq
  \tilde{\sigma}=\sigma - B_C - \imath_AB \,, \nonumber
\edeq where $C$ is the $(1,1)$-tensor $P^{\sharp}B^{\flat}$ and
$B_C$ is the $2$-form given by $B_C(X,Y)=B(CX,Y)$.

Having in mind that a gc structure with background is a special
case of a PqNb structure, we now extend the concept of gauge
transformation to the latter.
\begin{theorem}\label{theorem4.0}
Let $(P,A,\phi,H)$ be a Poisson quasi-Nijenhuis structure with
background on $M$, and $B\in \Omega^2(M)$. Consider the tensors
$\tilde{P},\tilde{A},\tilde{\phi},\tilde{H}$ on $M$ given by:
\begin{eqnarray}
 & & \tilde{P}=P \,, \label{eq23.1}\\
 & & \tilde{A}=A + P^{\sharp}B^{\flat} \,,\label{eq23.2}\\
 & & \tilde{\phi}= \phi - dB_C - d(\imath_AB)\,,\label{eq23.3}\\
 & & \tilde{H}=H+dB \,.\label{eq23.4}
\end{eqnarray}
Then, $(\tilde{P},\tilde{A},\tilde{\phi},\tilde{H})$ is a Poisson
quasi-Nijenhuis structure with background on $M$.
\end{theorem}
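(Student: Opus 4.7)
The plan is a direct verification of the four defining identities \eqref{eq2.1}--\eqref{eq2.4} for the new quadruple $(\tilde P, \tilde A, \tilde\phi, \tilde H)$, together with the closedness of $\tilde\phi$ and $\tilde H$. Closedness is immediate from $d^2 = 0$ and the closedness of $\phi$ and $H$. Condition \eqref{eq2.1} follows from the original identity $A\circ P^\sharp = P^\sharp\circ A^t$ once one notes that $(P^\sharp)^t = -P^\sharp$ and $(B^\flat)^t = -B^\flat$, so that $(P^\sharp B^\flat)^t = B^\flat P^\sharp$ and the extra $B$-contributions on the two sides coincide.

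For condition \eqref{eq2.2}, set $C := P^\sharp B^\flat$, so that $\tilde A = A + C$. Inspection of \eqref{eq1.8} shows that $\mathcal{C}_{P,\cdot}$ is linear in its $(1,1)$-tensor slot (through $A^t$), hence $\mathcal{C}_{P,\tilde A} = \mathcal{C}_{P,A} + \mathcal{C}_{P,C}$. The first summand is $-\imath_{P^\sharp\alpha\wedge P^\sharp\beta}H$ by hypothesis, so the task reduces to the auxiliary identity $\mathcal{C}_{P,C}(\alpha,\beta) = -\imath_{P^\sharp\alpha\wedge P^\sharp\beta}\,dB$. This is a short Cartan-calculus computation using \eqref{eq1.8}, the magic formula $\mathcal{L}_X = \imath_X d + d\,\imath_X$, and the Hamiltonian-vector-field identity $[P^\sharp\alpha,P^\sharp\beta] = P^\sharp[\alpha,\beta]_P$ coming from $[P,P]=0$.

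The Nijenhuis identity \eqref{eq2.3} is the crux. I would decompose $\mathcal{N}_{\tilde A}(X,Y) = \mathcal{N}_A(X,Y) + \mathcal{N}_C(X,Y) + T_{A,C}(X,Y)$, where $T_{A,C}(X,Y) := [AX,CY]+[CX,AY]-A[CX,Y]-A[X,CY]-C[AX,Y]-C[X,AY]+(AC+CA)[X,Y]$ is the mixed bilinear torsion. Substituting the assumed expression for $\mathcal{N}_A$ from \eqref{eq2.3}, the required equality becomes the identity
\[
\mathcal{N}_C + T_{A,C} \;=\; P^\sharp\bigl(\imath_{X\wedge Y}(-dB_C - d\,\imath_A B) + \imath_{CX\wedge Y}(H+dB) + \imath_{X\wedge CY}(H+dB) + \imath_{AX\wedge Y}dB + \imath_{X\wedge AY}dB\bigr).
\]
The shape of the correction $\tilde\phi - \phi$ is essentially forced by $\tilde H - H = dB$, so the verification is combinatorial but delicate. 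For \eqref{eq2.4} I would use that $\imath_{\tilde A} = \imath_A + \imath_C$, hence $d_{\tilde A} = d_A + d_C$, together with the graded identity $d_A d + d\,d_A = 0$ (which follows from \eqref{eq1.6} and $d^2 = 0$); the hypothesis $d_A\phi = d\mathcal{H}$ then reduces $d_{\tilde A}\tilde\phi = d\tilde{\mathcal{H}}$ to explicit identities among $d$-exact forms in $B$, $A$, and $dB$, computable from the definitions of $\tilde{\mathcal{H}}$, $B_C$, and $\imath_A B$.

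The main obstacle is the bookkeeping in step \eqref{eq2.3}: each summand of $\mathcal{N}_C + T_{A,C}$ must be rewritten through $P^\sharp$ using the Cartan-calculus identities together with the Jacobi-type identities implied by $[P,P]=0$, and the resulting expressions must regroup precisely into the right-hand side above. I expect this manipulation to be the heaviest part of the proof, and, in line with the paper's announced organization, I would isolate the most tedious pieces as technical lemmas in the appendix, keeping the main body to the structural outline sketched here.
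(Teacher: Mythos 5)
Your proposal follows essentially the same route as the paper's proof: the same term-by-term verification of \eqref{eq2.1}--\eqref{eq2.4}, the same additive decompositions $\mathcal{C}_{P,\tilde A}=\mathcal{C}_{P,A}+\mathcal{C}_{P,C}$ and $\mathcal{N}_{\tilde A}=\mathcal{N}_A+\mathcal{N}_C+T_{A,C}$, the same use of $d_{\tilde A}=d_A+d_C$ with $d d_A=-d_A d$, and the same key auxiliary identities (which the paper likewise isolates as Lemmas \ref{theorem4.1}--\ref{theorem4.3.5} in its appendix, the stated formula for $\mathcal{N}_C+T_{A,C}$ matching the sum of \eqref{eq25} and \eqref{eq26} exactly). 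The outline is correct as far as it goes; the remaining work is precisely the computational content of those appendix lemmas.
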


In order to prove the theorem, we need some lemmas. Their proofs
are included in the Appendix.

\begin{lemma}\label{theorem4.1}
Let $P$ be a Poisson bivector on $M$ and $B\in \Omega^2(M)$.
Consider the $(1,1)$-tensor $C=P^{\sharp}B^{\flat}$. Then, the
concomitant of $P$ and $C$ is given by \bgeq\label{eq24}
\mathcal{C}_{P,C}(\alpha,\beta) = -\imath_{P^{\sharp}\alpha\wedge
P^{\sharp}\beta}dB \,, \edeq for all $\alpha,\beta\in
\Omega^1(M)$, and the torsion of $C$ reads \bgeq\label{eq25}
\mathcal{N}_{C}(X,Y)= P^{\sharp}\left(\imath_{CX\wedge Y}dB +
\imath_{X\wedge CY}dB - \imath_{X\wedge Y}dB_C\right) \,, \edeq
for all $X,Y\in \X^1(M)$. Moreover, we have
\bgeq\label{eq25.2} d_CB_C=\mathcal{B}^{C,C} - dB_{C^2} \,, \edeq
where, for any $(1,1)$-tensors $S,T$, we denote \bgdisp
\mathcal{B}^{S,T}(X,Y,Z) = \circlearrowleft_{X,Y,Z} dB(SX,TY,Z)
\,, \eddisp and $B_{C^2}$ is the $2$-form defined by
$B_{C^2}(X,Y)=B(C^2X,Y)$.
\end{lemma}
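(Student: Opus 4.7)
The plan is to verify \eqref{eq24}, \eqref{eq25}, and \eqref{eq25.2} by direct computation, exploiting two structural ingredients throughout: the Poisson identity $[P^\sharp\alpha,P^\sharp\beta]=P^\sharp[\alpha,\beta]_P$ (equivalent to $[P,P]=0$), and the transpose formula $C^t\alpha=\imath_{P^\sharp\alpha}B$ coming from $(P^\sharp)^t=-P^\sharp$ and $(B^\flat)^t=-B^\flat$. A useful consequence, obtained by pairing both expressions against a test vector, is the symmetry $B(CX,Y)=B(X,CY)=P(B^\flat X,B^\flat Y)$; this will be the crux of the proof of \eqref{eq25.2}.

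For \eqref{eq24}, I substitute $C^t\alpha=\imath_{P^\sharp\alpha}B$ into the intrinsic expression \eqref{eq1.8} for $\mathcal{C}_{P,C}(\alpha,\beta)$. The six resulting terms are rewritten using Cartan's magic formula $\mathcal{L}_X=d\imath_X+\imath_X d$ together with the commutator identity $\imath_{[X,Y]}=\mathcal{L}_X\imath_Y-\imath_Y\mathcal{L}_X$. The Poisson identity then replaces $[P^\sharp\alpha,P^\sharp\beta]$ by $P^\sharp[\alpha,\beta]_P$, and the closed-form contributions $d(P(\imath_{P^\sharp\alpha}B,\beta))$ and $d(P(\alpha,\beta))$ recombine with the $\mathcal{L}B$-terms. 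Using the antisymmetry of $B$ to cancel every term involving $\mathcal{L}B$ or the $P$-bracket of the $1$-forms, the only surviving contribution is $-\imath_{P^\sharp\alpha\wedge P^\sharp\beta}dB$.

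For \eqref{eq25}, I expand $\mathcal{N}_C(X,Y)=[CX,CY]-C[CX,Y]-C[X,CY]+C^2[X,Y]$ using $CW=P^\sharp(\imath_W B)$. The Poisson identity turns $[CX,CY]$ into $P^\sharp[\imath_X B,\imath_Y B]_P$, which \eqref{eq1} unfolds into Lie-derivative and $d$-terms; Cartan's formula reduces each $\mathcal{L}_V B$ to interior contractions with $dB$. The remaining three pieces of the torsion are each of the form $-P^\sharp(\imath_\bullet B)$ and, after another application of Cartan, the $\imath\imath B$-contributions cancel pairwise, leaving exactly the $dB_C$ and $dB$ terms of \eqref{eq25}. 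This is the most delicate step in the whole lemma: keeping signs straight, and not conflating expressions like $\mathcal{L}_{CX}\imath_Y B$ with $\imath_{CY}\mathcal{L}_X B$, is the main bookkeeping obstacle.

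For \eqref{eq25.2}, the symmetry $B(CX,Y)=B(X,CY)$ implies, via the derivation formula for $\imath_C$, the identities $\imath_C B=2B_C$ and hence $\imath_C B_C=2B_{C^2}$. Using $d_C=\imath_C d-d\imath_C$ this gives $d_C B_C=\imath_C dB_C-2\,dB_{C^2}$, so it remains to check $\imath_C dB_C=\mathcal{B}^{C,C}+dB_{C^2}$. Expanding $dB_C$ by the Koszul formula and contracting cyclically with $C$ on each of its three slots, the function-derivative terms rearrange into the cyclic sum $\mathcal{B}^{C,C}$ once the symmetry $B(CW,U)=B(W,CU)$ is used to move $C$ between slots, while the bracket-in-$B_C$ terms collect into $dB_{C^2}$, concluding the lemma.
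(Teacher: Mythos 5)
Your reduction of \eqref{eq25.2} to the identity $\imath_C\,dB_C=\mathcal{B}^{C,C}+dB_{C^2}$ is correct and is a clean repackaging: the steps $\imath_CB=2B_C$, $\imath_CB_C=2B_{C^2}$ and $d_C=\imath_C\circ d-d\circ\imath_C$ all follow formally from the symmetry $B(CX,Y)=B(X,CY)$. The gap is in how you claim to finish. The identity $\imath_C\,dB_C=\mathcal{B}^{C,C}+dB_{C^2}$ is \emph{not} obtainable by rearranging the Koszul expansion and ``moving $C$ between slots'': expanding $\imath_C\,dB_C(X,Y,Z)$ only ever produces Lie brackets in which $C$ sits in at most one argument ($[CX,Y]$, $[X,CY]$, $[X,Y]$, etc.), whereas $\mathcal{B}^{C,C}$ contains the terms $B([CX,CY],Z)$, $B([CY,CZ],X)$, $B([CZ,CX],Y)$ with $C$ in both slots of the bracket, and no amount of transferring $C$ across arguments of $B$ can create those. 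If you carry the bookkeeping through, the single-$C$ bracket terms and part of the derivative terms do match, but the two sides differ by a residue of the form
\[
\circlearrowleft_{X,Y,Z}\Bigl((P^{\sharp}B^{\flat}X)\bigl(P(B^{\flat}Y,B^{\flat}Z)\bigr)-B^{\flat}Z\bigl([P^{\sharp}B^{\flat}X,P^{\sharp}B^{\flat}Y]\bigr)\Bigr),
\]
which is precisely (up to sign and convention) $d_PP(B^{\flat}X,B^{\flat}Y,B^{\flat}Z)$, i.e.\ the Jacobiator of $P$ contracted with $B^{\flat}X,B^{\flat}Y,B^{\flat}Z$. It vanishes only because $P$ is Poisson; this is exactly the point the paper's own computation makes explicit (``we used the fact of $P$ being Poisson in the third and in the last equalities''). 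Although your opening paragraph lists the morphism property $[P^{\sharp}\alpha,P^{\sharp}\beta]=P^{\sharp}[\alpha,\beta]_P$ as an ingredient used ``throughout'', the mechanism you actually describe for \eqref{eq25.2} does not invoke it and cannot close without it, so as written this step would fail.

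For \eqref{eq24} and \eqref{eq25} your route — direct expansion from \eqref{eq1.8} and from the definition of $\mathcal{N}_C$, using $C^{t}\alpha=\imath_{P^{\sharp}\alpha}B$ and the morphism property of $P^{\sharp}$ — is sound and standard. Note that the paper does not prove these two formulas at all; it cites Magri--Morosi (B.3.9) and (B.3.8), so there you are supplying an argument the paper omits rather than diverging from it. The only part the paper proves from scratch is \eqref{eq25.2}, which is exactly where your argument has the gap.
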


\

\begin{lemma}\label{theorem4.2.5}
Take tensors $Q\in \X^2(M)$, $H\in \Omega^3(M)$ and $A\in {\rm
End}(TM)$ such that \bgeq \label{eq25.5} Q^\sharp A^t=AQ^\sharp
\,, \edeq and \bgeq \label{eq25.6} \mathcal{C}_{Q,A}(\alpha,\beta)
= -\imath_{Q^{\sharp}\alpha\wedge Q^{\sharp}\beta}H \,, \edeq for
all $\alpha,\beta\in \Omega^1(M)$. Take also $B\in\Omega^2(M)$ and
denote $C=Q^{\sharp}B^\flat$. Then, we have \bgdisp
[AX,CY]-A[CX,Y]-A[X,CY]+AC[X,Y] \eddisp \bgdisp +
[CX,AY]-C[AX,Y]-C[X,AY]+CA[X,Y] = \eddisp \bgeq\label{eq26}
Q^{\sharp}\left(\imath_{AX\wedge Y}dB + \imath_{X\wedge AY}dB -
\imath_{X\wedge Y}d(\imath_AB)+\imath_{CX\wedge Y}H +
\imath_{X\wedge CY}H \right)\,, \edeq for all $X,Y\in \X^1(M)$,
and, moreover, \bgeq \label{eq27} d_AB_C + d_C(\imath_AB) =
\mathcal{H}^{C,C} + \mathcal{B}^{A,C} + \mathcal{B}^{C,A}
 - dB_{AC} - d(\imath_{CA}B) \,,
\edeq where, for any $(1,1)$-tensors $S,T$, we denote \bgdisp
\mathcal{H}^{S,T}(X,Y,Z) = \circlearrowleft_{X,Y,Z} H(SX,TY,Z) \,,
\eddisp and $B_{AC}$ is the $2$-form defined by
$B_{AC}(X,Y)=B(ACX,Y)$.
\end{lemma}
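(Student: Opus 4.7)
The lemma states two tensorial identities, \eqref{eq26} and \eqref{eq27}, both algebraic consequences of $C=Q^{\sharp}B^{\flat}$ together with the compatibility assumptions \eqref{eq25.5}--\eqref{eq25.6}. My plan is to prove them separately, using Cartan calculus throughout and invoking \eqref{eq25.6} precisely to introduce the $H$-terms that appear on the right-hand sides; Lemma \ref{theorem4.1} will handle any ``pure-$C$'' pieces that arise along the way.

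\emph{Identity \eqref{eq26}.} A direct rearrangement shows that the LHS equals
\[
\mathcal{N}_{A+C}(X,Y)-\mathcal{N}_A(X,Y)-\mathcal{N}_C(X,Y),
\]
so the identity records how the Nijenhuis torsion fails to be additive when we perturb $A$ by $C$. I would expand each of the brackets $[AX,CY]$, $[CX,AY]$ and the compositions $A[CX,Y]$, $A[X,CY]$, $C[AX,Y]$, $C[X,AY]$ by substituting $CZ=Q^{\sharp}(B^{\flat}Z)$ and applying the standard Lie-bracket identity
\[
[W,Q^{\sharp}\eta]=Q^{\sharp}(\mathcal{L}_W\eta)+(\mathcal{L}_W Q)^{\sharp}(\eta).
\]
The Poisson property of $Q$ controls $\mathcal{L}_W Q$ on Hamiltonian vector fields, and the surviving $A$-dependent combinations assemble, through the formula \eqref{eq1.8}, into the concomitant $\mathcal{C}_{Q,A}$. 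Substituting \eqref{eq25.6} replaces that concomitant by an $H$-pairing and produces the $\imath_{CX\wedge Y}H+\imath_{X\wedge CY}H$ summands on the RHS. Applying Cartan's formula $\mathcal{L}_W=\imath_W d+d\imath_W$ to the residual Lie derivatives acting on $B^{\flat}$ then yields the $dB$- and $d(\imath_A B)$-terms, completing the identification.

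\emph{Identity \eqref{eq27}.} This is a 3-form identity. I would unpack both summands of the LHS via $d_S=\imath_S d-d\imath_S$, obtaining
\[
d_A B_C+d_C(\imath_A B)=(\imath_A dB_C-d\imath_A B_C)+(\imath_C d\imath_A B-d\imath_C\imath_A B).
\]
Evaluating on a triple $(X,Y,Z)$ and taking cyclic sums, the pieces $\imath_A dB_C$ and $\imath_C d(\imath_A B)$ account for the $\mathcal{B}^{A,C}+\mathcal{B}^{C,A}$ contribution once $dB_C$ and $d(\imath_A B)$ are expanded in the cyclic-sum definition of $d$. The exact remainders $-dB_{AC}$ and $-d(\imath_{CA}B)$ emerge from the $d\imath_A B_C$ and $d\imath_C\imath_A B$ pieces after tracking the antisymmetries of $B$, $B_C$ and $\imath_A B$. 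The delicate $\mathcal{H}^{C,C}$ contribution is produced by feeding the concomitant hypothesis \eqref{eq25.6} with one-forms of the special form $B^{\flat}X$ and $B^{\flat}Y$, whose image $-\imath_{CX\wedge CY}H$, once cyclically summed, assembles into $\mathcal{H}^{C,C}$.

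The main obstacle is \eqref{eq27}: the Cartan expansion generates many intermediate 3-forms and the cancellations required to isolate precisely $\mathcal{H}^{C,C}$, rather than a mixed $\mathcal{H}^{A,C}$ or $\mathcal{H}^{C,A}$ term, depend on using \eqref{eq25.6} only on pairs already of the form $(B^{\flat}X,B^{\flat}Y)$. Getting signs and the exact-term structure right is where the calculation is most error-prone; by comparison, \eqref{eq26} is a more mechanical consequence of the Schouten--Poisson formalism.
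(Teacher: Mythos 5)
Your strategy coincides in outline with the paper's: both identities are established by direct expansion, with hypothesis \eqref{eq25.6} invoked exactly where you place it --- on pairs $(B^{\flat}X,\alpha)$ for \eqref{eq26} and on pairs $(B^{\flat}X,B^{\flat}Y)$ for \eqref{eq27}, the latter being precisely how $\mathcal{H}^{C,C}$ appears in the paper, namely as the cyclic sum $-\circlearrowleft_{X,Y,Z}\mathcal{C}_{Q,A}(B^{\flat}X,B^{\flat}Y)(Z)$. Your observation that the left-hand side of \eqref{eq26} equals $\mathcal{N}_{A+C}-\mathcal{N}_A-\mathcal{N}_C$ is correct and is exactly what makes the lemma useful in the proof of Theorem \ref{theorem4.0}.

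Two points in your outline are genuinely problematic. First, you invoke ``the Poisson property of $Q$'', but the lemma does not assume $Q$ is Poisson, and the paper's proof of both identities uses only \eqref{eq25.5}, \eqref{eq25.6} and the explicit formula \eqref{eq1.8} for the concomitant, which is valid for an arbitrary bivector. If your route through $[W,Q^{\sharp}\eta]=Q^{\sharp}(\mathcal{L}_{W}\eta)+(\mathcal{L}_{W}Q)^{\sharp}\eta$ needs $[Q,Q]=0$ to dispose of the $\mathcal{L}_{W}Q$ terms, you are relying on a hypothesis you do not have; those terms must instead be absorbed into $\mathcal{C}_{Q,A}$ via \eqref{eq1.8}. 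Second, in \eqref{eq27} your accounting is off: $\imath_{A}dB_{C}$ is built from the exterior derivative of the $2$-form $B_{C}$, not from $dB$ with $C$ inserted into one argument, so it does not directly produce $\mathcal{B}^{A,C}+\mathcal{B}^{C,A}$; converting $dB_{C}$ into $dB$-expressions is itself an identity of the type \eqref{eq25.2}, whose proof in Lemma \ref{theorem4.1} does use that the bivector is Poisson. The paper sidesteps this by expanding $d_{A}B_{C}$ and $d_{C}(\imath_{A}B)$ through the Koszul-type formula for the deformed brackets $[\,,]_{A}$ and $[\,,]_{C}$, so that every term is either a derivative of $B$ along $AX_i$ or $CX_i$, or a value of $B$ on a deformed bracket; these regroup into $\mathcal{B}^{A,C}+\mathcal{B}^{C,A}-dB_{AC}-d(\imath_{CA}B)$ plus the concomitant sum above. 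You should redo the bookkeeping along those lines; as proposed, your groupings would not close up, and the signs you yourself flag as error-prone are exactly where the discrepancy would surface.
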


\

\begin{lemma}\label{theorem4.3.5}
Take tensors $Q\in \X^2(M)$, $\phi,H\in \Omega^3(M)$, and $A\in
{\rm End}(TM)$, and suppose that \bgeq \label{eq27.1}
\mathcal{N}_{A}(X,Y) = Q^{\sharp}\left(\imath_{X\wedge Y}\phi +
\imath_{AX\wedge Y}H + \imath_{X\wedge AY}H \right) \,, \edeq for
all $X,Y\in \X^1(M)$. Take also $B\in\Omega^2(M)$ and denote
$C=Q^{\sharp}B^\flat$. Then, we have that \bgeq \label{eq27.2}
d_A(\imath_AB)=\mathcal{H}^{A,C} + \mathcal{H}^{C,A} +
\mathcal{B}^{A,A} - dB_{A,A} + \imath_C\phi \,, \edeq where
$B_{A,A}$ is the $2$-form given by $B_{A,A}(X,Y)=B(AX,AY)$.
\end{lemma}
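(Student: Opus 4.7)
The plan is to separate a purely algebraic identity (independent of \eqref{eq27.1}) from the specific use of the Poisson/torsion hypothesis. Concretely, I would first prove that, for every $(1,1)$-tensor $A$ and every $B \in \Omega^2(M)$ on any manifold $M$,
\[
 d_A(\imath_A B) = \imath_{\mathcal{N}_A} B + \mathcal{B}^{A,A} - dB_{A,A},
\]
where $\imath_{\mathcal{N}_A} B$ denotes the $3$-form $(X,Y,Z) \mapsto \circlearrowleft_{X,Y,Z} B(\mathcal{N}_A(X,Y),Z)$ obtained by contracting $B$ against the vector-valued $2$-form $\mathcal{N}_A$. The assumption \eqref{eq27.1} is then used only to rewrite the $\imath_{\mathcal{N}_A} B$-term in the required form.

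For the algebraic identity, the starting point is the operator decomposition $d_A = \imath_A d - d\imath_A$ together with the elementary identity $\imath_A(\imath_A B) = \imath_{A^2} B + 2 B_{A,A}$, which follows from the fact that $\imath_A$ is a degree-zero derivation on $\Omega(M)$. This yields
\[
 d_A(\imath_A B) = \imath_A d(\imath_A B) - d\imath_{A^2} B - 2\, dB_{A,A}.
\]
Evaluating $\imath_A d(\imath_A B)(X,Y,Z) - d\imath_{A^2} B(X,Y,Z)$ via the Cartan--Koszul formula, the bracket contributions reorganise into combinations of the form $[AX,AY]-A[AX,Y]-A[X,AY]+A^2[X,Y]$ paired with $B$, which cyclically yield $\imath_{\mathcal{N}_A} B$; the mixed directional-derivative contributions collect into $dB_{A,A}$ with precisely the sign required to cancel one of the two copies already present; and the residual $dB(AX,AY,Z)$-type terms sum cyclically to $\mathcal{B}^{A,A}$. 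This is structurally parallel to, though shorter than, the proof of Lemma~\ref{theorem4.2.5} carried out in the appendix.

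The rewriting of $\imath_{\mathcal{N}_A} B$ via the hypothesis is short. Using $CZ = Q^\sharp(B^\flat Z)$ together with the antisymmetry of $Q$, which translates into $\alpha(Q^\sharp \beta) = -\beta(Q^\sharp \alpha)$ for all $\alpha,\beta \in \Omega^1(M)$, a direct pairing of \eqref{eq27.1} against $\imath_Z B = B^\flat Z$ gives
\[
 B(\mathcal{N}_A(X,Y), Z) = \phi(X, Y, CZ) + H(AX, Y, CZ) + H(X, AY, CZ).
\]
Summing cyclically over $(X, Y, Z)$ and rotating each summand into standard position via the antisymmetry of $\phi$ and $H$ produces $\imath_{\mathcal{N}_A} B = \imath_C \phi + \mathcal{H}^{A,C} + \mathcal{H}^{C,A}$, and substituting into the algebraic identity is exactly \eqref{eq27.2}.

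The main obstacle is the bookkeeping in the algebraic step: the Cartan--Koszul expansion produces a sizeable list of terms involving brackets $[AX,Y]$, $[X,AY]$, $[AX,AY]$, $A[X,Y]$, $A^2[X,Y]$ and directional derivatives of $B$ along $AX$, $AY$, $AZ$. Grouping them correctly is not conceptually difficult, but the signs and the cyclic summation must be tracked with care; this is the only substantive part of the argument.
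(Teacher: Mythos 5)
Your proposal is correct and follows essentially the same route as the paper: the intermediate identity $d_A(\imath_A B)=\imath_{\mathcal N_A}B+\mathcal B^{A,A}-dB_{A,A}$ that you isolate is precisely the paper's second displayed equality (there written as $-B^{\flat}(Z)(\mathcal N_A(X,Y))+B^{\flat}(Y)(\mathcal N_A(X,Z))-B^{\flat}(X)(\mathcal N_A(Y,Z))$ plus the $\mathcal B^{A,A}-dB_{A,A}$ terms), and your substitution of \eqref{eq27.1} followed by cyclic rotation into $\imath_C\phi+\mathcal H^{A,C}+\mathcal H^{C,A}$ is the same final step. The only difference is in bookkeeping: the paper reaches the algebraic identity by expanding $d_A(\imath_AB)$ through the Koszul-type formula for the deformed bracket $[\,,]_A$ with anchor $A$, whereas you expand $d_A=\imath_A d-d\imath_A$ and invoke the derivation rule $\imath_A\imath_A B=\imath_{A^2}B+2B_{A,A}$ before applying Cartan--Koszul to the ordinary $d$.
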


\

\begin{proof}[Proof of Theorem \ref{theorem4.0}]
Let us show that $(\tilde{P},\tilde{A},\tilde{\phi},\tilde{H})$
satisfies conditions \eqref{eq2.1}-\eqref{eq2.4}. We have \bgdisp
\tilde{A}\tilde{P}^{\sharp}=(A+P^{\sharp}B^{\flat})P^{\sharp}=P^{\sharp}A^t
+ P^{\sharp} (P^{\sharp}B^{\flat})^t=\tilde{P}^{\sharp}\tilde{A}^t
, \eddisp which is \eqref{eq2.1}. Condition \eqref{eq2.2} follows
from \eqref{eq24}:
\begin{equation}\mathcal{C}_{\tilde{P},\tilde{A}}(\alpha,\beta) = \mathcal{C}_{P,A}(\alpha,\beta)+\mathcal{C}_{P,C}(\alpha,\beta)
=  - \imath_{P^{\sharp}\alpha\wedge P^{\sharp}\beta}(H+dB)
 =  - \imath_{\tilde{P}^{\sharp}\alpha\wedge
\tilde{P}^{\sharp}\beta}\tilde{H},\nonumber \end{equation}
for all $\alpha,\beta\in \Omega^1(M)$. To compute the torsion of
$\tilde{A}$ we use \eqref{eq25} and \eqref{eq26}:
\begin{eqnarray*}
\lefteqn{\mathcal{N}_{\tilde{A}}(X,Y) = \mathcal{N}_{A}(X,Y) + \mathcal{N}_{C}(X,Y) }\\
&  & + \left([AX,CY]-A[CX,Y]-A[X,CY]+AC[X,Y] \right)\\
                     & & + \left([CX,AY]-C[AX,Y]-C[X,AY]+CA[X,Y] \right)\\
                     &= &P^{\sharp}\left(\imath_{X\wedge Y}\phi + \imath_{AX\wedge Y}H + \imath_{X\wedge AY}H+\imath_{CX\wedge Y}dB + \imath_{X\wedge CY}dB - \imath_{X\wedge Y}dB_C \right)\\
                     & & + P^{\sharp}\left(\imath_{AX\wedge Y}dB + \imath_{X\wedge AY}dB - \imath_{X\wedge Y}d(\imath_AB)+\imath_{CX\wedge Y}H + \imath_{X\wedge CY}H \right)\\
                     &=& \tilde{P}^{\sharp}\left(\imath_{X\wedge Y}\tilde{\phi} + \imath_{\tilde{A}X\wedge Y}\tilde{H} + \imath_{X\wedge \tilde{A}Y}\tilde{H} \right)\,,
\end{eqnarray*}
for all $X,Y\in \X^1(M)$. Finally, from \eqref{eq25.2},
\eqref{eq27} and \eqref{eq27.2}, together with the identity $d
\circ d_A=-d_A \circ d$,
 we get
\begin{eqnarray*}
\lefteqn{d_{\tilde{A}}\tilde{\phi} =  d_A\phi - d_AdB_C - d_Ad(\imath_AB)
+ d_C\phi - d_CdB_C - d_Cd(\imath_AB) }\\
& &= d\mathcal{H}^{A,A} + d\mathcal{H}^{C,C} + d\mathcal{H}^{A,C}
+
d\mathcal{H}^{C,A}
 + d\mathcal{B}^{A,A} + d\mathcal{B}^{C,C} +
d\mathcal{B}^{A,C} + d\mathcal{B}^{C,A}\\
& &=d\tilde{\mathcal{H}},
\end{eqnarray*}
where $\tilde{\mathcal{H}}$ is the $3$-form given by $
\tilde{\mathcal{H}}(X,Y,Z)=\circlearrowleft_{X,Y,Z}\tilde{H}(\tilde{A}X,\tilde{A}Y,Z)
$, for all $X,Y,Z\in \X^1(M)$. This completes the proof.
\end{proof}

Let $\mathfrak{C}_{PqNb}(M)$ denote the class of all Poisson
quasi-Nijenhuis structures with background on $M$.

\begin{definition}\label{def2}
 Let $B$ be a $2$-form on $M$. The map
$\mathfrak{B}:\mathfrak{C}_{PqNb}(M)  \to \mathfrak{C}_{PqNb}(M) $
which assigns to each PqNb structure $(P,A,\phi,H)\in
\mathfrak{C}_{PqNb}(M)$ the PqNb structure $
(\tilde{P},\tilde{A},\tilde{\phi},\tilde{H})$ defined by equations
\eqref{eq23.1}-\eqref{eq23.4} is called the gauge transformation
on $\mathfrak{C}_{PqNb}(M)$ determined by $B$.
\end{definition}

\begin{example} \label{ex2}
{\rm Take the PqNb structure $(P,A,\phi,H)$ on $\mathbb R ^3$ of
Example \ref{ex1}. The gauge transformation of this structure
determined by the $2$-form $B=  dx_2 \wedge dx_3$ is the PqNb
structure $(\tilde{P},\tilde{A},\tilde{\phi},\tilde{H})$ with
$\tilde{P}=P$, $\tilde{A}=A+\displaystyle{ f
\frac{\partial}{\partial x_1}\otimes dx_3}$, $\tilde{\phi}=\phi$
and $\tilde{H}=H$. In this case only the $(1,1)$-tensor is
modified but $
\mathcal{C}_{\tilde{P},\tilde{A}}=\mathcal{C}_{P,A}$}.
\end{example}

\begin{remark}
{\rm
Notice that the gauge transformation of gc structures with
background, defined by \eqref{eq21}, can be recovered from Theorem \ref{theorem4.0}
if we additionally specify the
transformation of the $2$-form $\sigma$. Moreover, using the alternative definition of gc structure with background in Remark \ref{remark_def_gc}, a gauge transformation of a gc structure $L$, with background $H$, is a new gc structure $L'$ with background $H+dB$, obtained from the action of a (real) $2$-form $B$ on $L$, $L'= {\bf B} L$, with $\bf B$ given by \eqref{matrix_B}.}
\end{remark}

 Gauge transformations can preserve
 main subclasses of PqNb structures on $M$. In fact, if we
require $B$ to be closed, then the associated gauge transformation
will preserve the class of PqN structures. Moreover, given a PN
structure $(P,A)$, if $B$, besides being closed, satisfies $d(B_C
+ \imath_AB)=0$, then
$\mathfrak{B}(P,A)=(P,A+P^{\sharp}B^{\flat})$ is still a PN
structure.

\begin{example}
{\rm Consider the PN structure on $\mathbb R^3$ defined by $P=
\displaystyle{\frac{\partial}{\partial x_1} \wedge
\frac{\partial}{\partial x_2} }$ and $A= \displaystyle{e^{x_3}(
\frac{\partial}{\partial x_1} \otimes dx_1 +
\frac{\partial}{\partial x_2} \otimes dx_2 +
\frac{\partial}{\partial x_3} \otimes dx_3 + x_2
\frac{\partial}{\partial x_2} \otimes dx_3)}$ and take the
$2$-form $B=e^{x_2} dx_2 \wedge dx_3$. We have
$C=\displaystyle{e^{x_2} \frac{\partial}{\partial x_1} \otimes
dx_3}$ and $\imath _A B= 2 e^{x_3} B$. Thus, $dB=0$, $B_C=0$ and
$d(\imath _A B)=0$, and therefore, the gauge transformation of the
initial PN structure is still a PN structure.}
\end{example}

\begin{remark}\label{remark2.7}
{\rm Theorem \ref{theorem4.0} and Definition \ref{def2} can be
straightforwardly generalized
  for a generic Lie algebroid $E$ over $M$. In fact, all the computations
  made to prove Theorem \ref{theorem4.0} and Lemmas \ref{theorem4.1}-\ref{theorem4.3.5} are still valid in such
  case. So, if
  $\mathfrak{C}_{PqNb}(E)$ denotes the class of all PqNb structures on
  $E$ and a $2$-form $B$ on $E$ is given, we define the associated gauge
  transformation $\mathfrak{B}:\mathfrak{C}_{PqNb}(E)  \to
  \mathfrak{C}_{PqNb}(E)$ by setting
  $\mathfrak{B}(P,A,\phi,H)=(\tilde{P},\tilde{A},\tilde{\phi},\tilde{H})$
  where $\tilde{P}=P$, $\tilde{A}=A + C$, $\tilde{\phi}= \phi - d_EB_C - d_E(\imath_AB)$ and $ \tilde{H}=H+d_EB$, with $C=P^{\sharp}B^{\flat}$.}
\end{remark}

\begin{remark}
{\rm  It is worth to mention that the expression \emph{gauge
transformation} is used in literature, by some authors, with a
different meaning from that  in Definition \ref{def2}. We will
point out one big difference. $\bf{B}$-field operation (or gauge
transformation) defined by (\ref{eq20}) was used in \cite{bib23,
bib17, BurszRadko} to transform Dirac structures of ${\mathbb T}M$
and, due to its own properties, a gauge transformation of a Dirac
structure is still a Dirac structure (eventually with respect to a
different Courant bracket on ${\mathbb T}M$). As it is well known,
Poisson structures can be viewed as Dirac subbundles; more
precisely, if $P$ is a Poisson bivector on $M$, then its graph
$L_P$ is a Dirac structure of ${\mathbb T}M$. However, the image
of $L_P$ under the mapping (\ref{eq20}), which is a Dirac
structure, is not, in general, the graph of a Poisson bivector
\cite{bib23}. Under some mild conditions this could happen and, if
this is the case, the new Poisson tensor is different from the
initial one. The philosophy in this paper is quite different
since, according to Theorem \ref{theorem4.0}, the Poisson bivector
in a PqNb structure does not change under gauge transformations.}

\end{remark}

\subsection{Construction of Poisson quasi-Nijenhuis structures with
  background}\label{construction_PqNb}

Gauge transformations can be used as a tool for generating PqNb
structures from other PqNb structures but also to construct richer
examples from simpler ones. For example, we can construct PqNb
structures from a Poisson bivector $P$, since any Poisson
structure can be viewed as a PqNb structure where $A$, $\phi$ and
$H$ vanish. In fact, according to Definition \ref{def2}, given a
$2$-form $B$ on $M$, the associated gauge transformation takes a
Poisson structure $P$ to the PqNb structure $(P,C,-dB_C,dB)$. This
proves the following:
\begin{theorem}\label{theorem6}
Let $P$ be a Poisson bivector on $M$ and $B\in \Omega^2(M)$.
Consider the $(1,1)$-tensor $C=P^{\sharp}B^{\flat}$. Then,
$(P,C,-dB_C,dB)$ is a Poisson quasi-Nijenhuis structure with
background on $M$.
\end{theorem}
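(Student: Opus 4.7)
The plan is to derive this theorem as an immediate corollary of Theorem \ref{theorem4.0}. First I would observe that any Poisson bivector $P$ on $M$ gives rise to a trivial PqNb structure $(P,0,0,0)$: conditions \eqref{eq2.1}--\eqref{eq2.4} of Definition \ref{def1} are vacuous when $A$, $\phi$ and $H$ all vanish, since $\mathcal{C}_{P,0}=0$, $\mathcal{N}_0=0$, and the $3$-form $\mathcal{H}$ attached to the zero background is identically zero.

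Next I would apply Theorem \ref{theorem4.0} to this trivial structure with the given $2$-form $B$. The gauge-transformed quadruple given by \eqref{eq23.1}--\eqref{eq23.4} specializes, with $A=0$, $\phi=0$, $H=0$, to
\begin{equation}
\tilde{P}=P,\qquad \tilde{A}=0+P^{\sharp}B^{\flat}=C,\qquad \tilde{\phi}=0-dB_C-d(\imath_0 B)=-dB_C,\qquad \tilde{H}=0+dB=dB.\nonumber
\end{equation}
Theorem \ref{theorem4.0} then guarantees that $(P,C,-dB_C,dB)$ is a PqNb structure on $M$, which is exactly the statement.

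There is essentially no obstacle once Theorem \ref{theorem4.0} is in hand; the whole content of the theorem is concentrated in the observation that a bare Poisson bivector sits inside $\mathfrak{C}_{PqNb}(M)$ and that gauge transformations act on this class. If one wanted an independent check without invoking Theorem \ref{theorem4.0}, the work would reduce to Lemma \ref{theorem4.1}: equation \eqref{eq24} gives condition \eqref{eq2.2} for $(P,C,dB)$, equation \eqref{eq25} rewritten as $\mathcal{N}_C(X,Y)=P^{\sharp}\bigl(\imath_{X\wedge Y}(-dB_C)+\imath_{CX\wedge Y}dB+\imath_{X\wedge CY}dB\bigr)$ is exactly \eqref{eq2.3} for the quadruple $(P,C,-dB_C,dB)$, and equation \eqref{eq25.2}, after applying $d$ and using $d\circ d_C=-d_C\circ d$, yields \eqref{eq2.4} with $\mathcal{H}(X,Y,Z)=\circlearrowleft_{X,Y,Z}dB(CX,CY,Z)$. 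Condition \eqref{eq2.1} is immediate from $C=P^{\sharp}B^{\flat}$ and the skew-symmetry of $P$, which forces $C^t=-B^{\flat}P^{\sharp}$ and hence $P^{\sharp}\circ C^t=-P^{\sharp}B^{\flat}P^{\sharp}=C\circ P^{\sharp}$ up to a sign that is absorbed by antisymmetry of $B$. All of this, however, is exactly what Theorem \ref{theorem4.0} has already packaged, so the cleanest proof simply invokes that theorem on the trivial starting structure.
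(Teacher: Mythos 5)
Your proof is correct and is exactly the paper's argument: the paper obtains Theorem \ref{theorem6} by viewing $P$ as the trivial PqNb structure $(P,0,0,0)$ and applying the gauge transformation of Definition \ref{def2} determined by $B$, which yields $(P,C,-dB_C,dB)$ directly from \eqref{eq23.1}--\eqref{eq23.4}. (One cosmetic remark on your optional independent check: since both $P^{\sharp}$ and $B^{\flat}$ are skew-adjoint, $C^t=B^{\flat}P^{\sharp}$ with no minus sign, so $P^{\sharp}\circ C^t=C\circ P^{\sharp}$ holds outright and no sign needs to be ``absorbed''.)
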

According to this theorem, we are able to construct PqNb
structures from any given $2$-form on a Poisson manifold. This
result was also derived by Antunes in \cite{bib8} using the
supergeometric techniques. Theorem \ref{theorem6} is also valid
for a generic Lie algebroid $E$ over $M$ (see Remark
\ref{remark2.7}) and this was in fact the approach followed in
\cite{bib8}.

We may now ask whether it is possible to choose a Poisson bivector
$P$ on $M$ and $B\in \Omega^2(M)$ in such a way that ${\mathcal
J}:=(C,P,-B_C)$ is a gc structure with background $dB$, i.e.
$(P,C,-dB_C,dB)$ is a PqNb structure and conditions (4) and (5) of
Theorem \ref{theorem0.8} hold.
 The answer
is no. If ${\mathcal J}$ was a gc structure with background $dB$,
then we would have $C^2=-{\rm Id}-P^{\sharp}(-B_C)^{\flat}=-{\rm
Id} + C^2$ and this is an impossible condition.

However, if (and only if) we can choose $P$ nondegenerate, there
is one, and only one, closed $2$-form $\omega$ that we can add to
$-B_C$ in order that ${\mathcal J}':= (C, P, -B_C+ \omega)$ is a
gc structure with background $dB$. This $2$-form $\omega$ is the
symplectic form associated to $P$, i.e. $\omega^{\flat} = -
(P^\sharp)^{-1}$. In fact, in this case, ${\mathcal J}'$ is the
image, by the gauge transformation determined by $B$, of the gc
structure ${\mathcal J}_{sympl}:=(0,P,\omega)$ and therefore is a
gc structure with background $dB$.

\

If the $2$-form $B$ in Theorem \ref{theorem6} additionally
satisfies \bgeq\label{eq29} \imath_{P^{\sharp}\alpha \wedge
P^{\sharp}\beta}dB=0 \,, \edeq for all $\alpha,\beta\in
\Omega^1(M)$, then it is easy to see that the contribution of the
background $dB$ in equations \eqref{eq2.2}-\eqref{eq2.4} vanishes,
i.e., \bgdisp \mathcal{C}_{P,C}(\alpha,\beta)=0 \quad,\quad
\mathcal{N}_{C}(X,Y) = P^{\sharp}\left(\imath_{X\wedge
Y}(-dB_C)\right) \quad,\quad  d_C(-dB_C)=0 \,. \eddisp We have
therefore the following result:
\begin{theorem}\label{theorem7}
Let $P$ be a Poisson bivector on $M$ and $B$ a $2$-form on $M$
satisfying \eqref{eq29}. Then, $(P,C,-dB_C)$ is a Poisson
quasi-Nijenhuis structure on $M$. In particular, this is true when
$dB=0$.
\end{theorem}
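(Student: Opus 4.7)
The strategy is to deduce Theorem \ref{theorem7} as a corollary of Theorem \ref{theorem6} together with hypothesis \eqref{eq29}. Theorem \ref{theorem6} already gives that $(P,C,-dB_C,dB)$ is a PqNb structure, so conditions \eqref{eq2.1}-\eqref{eq2.4} hold with $\phi=-dB_C$ and $H=dB$. To prove Theorem \ref{theorem7} it is therefore enough to check that, under \eqref{eq29}, every term in \eqref{eq2.2}-\eqref{eq2.4} that involves $H=dB$ drops out, so that the four identities reduce to the conditions of a PqN structure (i.e.\ the PqNb conditions with $H=0$).

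The key observation is that \eqref{eq29} should be read as: $dB$ vanishes on any pair of arguments lying in $\mathrm{Im}\,P^{\sharp}$. Since by definition $CX=P^{\sharp}(B^{\flat}X)$, every vector of the form $CX$ lies in $\mathrm{Im}\,P^{\sharp}$, so contractions of $dB$ that involve $CX$ together with another vector of the form $P^{\sharp}\gamma$ or $CY$ will vanish automatically. This is the sole computational input needed.

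I then go through the four conditions in order. Condition \eqref{eq2.1} does not involve $H$ and is untouched. For \eqref{eq2.2} the right-hand side is $-\imath_{P^{\sharp}\alpha\wedge P^{\sharp}\beta}dB$, which is zero by \eqref{eq29} itself, giving $\mathcal{C}_{P,C}(\alpha,\beta)=0$. For \eqref{eq2.3} I need to show $P^{\sharp}(\imath_{CX\wedge Y}dB)=0=P^{\sharp}(\imath_{X\wedge CY}dB)$; testing against an arbitrary $\gamma\in\Omega^1(M)$ rewrites, up to sign, as $dB(CX,Y,P^{\sharp}\gamma)$, and using $CX=P^{\sharp}(B^{\flat}X)$ this is $\imath_{Y}\imath_{P^{\sharp}\gamma}\imath_{P^{\sharp}(B^{\flat}X)}dB$, which vanishes by \eqref{eq29}. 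Hence \eqref{eq2.3} reduces to $\mathcal{N}_{C}(X,Y)=P^{\sharp}(\imath_{X\wedge Y}(-dB_C))$. Finally, for \eqref{eq2.4}, the $3$-form $\mathcal{H}$ defined by \eqref{eq3} (with $A$ replaced by $C$ and $H$ by $dB$) satisfies $\mathcal{H}(X,Y,Z)=\circlearrowleft_{X,Y,Z}dB(CX,CY,Z)$; each summand has two arguments in $\mathrm{Im}\,P^{\sharp}$ and so vanishes by \eqref{eq29}. Therefore $\mathcal{H}=0$, hence $d\mathcal{H}=0$, and \eqref{eq2.4} becomes $d_C(-dB_C)=0$.

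There is no real obstacle here; the entire content is that $C$ factors through $P^{\sharp}$, which makes hypothesis \eqref{eq29} strong enough to kill every $dB$-term appearing in the PqNb compatibility equations. The particular case $dB=0$ is immediate since then \eqref{eq29} is trivially satisfied.
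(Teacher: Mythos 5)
Your proposal is correct and follows essentially the same route as the paper: the paper also derives Theorem \ref{theorem7} directly from Theorem \ref{theorem6}, observing that under \eqref{eq29} every $dB$-contribution in \eqref{eq2.2}--\eqref{eq2.4} vanishes because $C$ factors through $P^{\sharp}$, leaving exactly $\mathcal{C}_{P,C}=0$, $\mathcal{N}_{C}(X,Y)=P^{\sharp}(\imath_{X\wedge Y}(-dB_C))$ and $d_C(-dB_C)=0$. You merely spell out the contractions that the paper dismisses as ``easy to see,'' which is fine.
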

This theorem gives a way of constructing PqN structures from a
$2$-form on a Poisson manifold. Moreover, in its version for a
generic Lie algebroid $E$ over $M$, this result contains Theorem
3.2 in \cite{bib22}. Just notice that, when \bgeq \label{eq30}
\imath_{P^{\sharp}\alpha}d_EB=0 \,, \edeq for all $\alpha\in
\Gamma(E^*)$, we have $\imath_C d_EB=0$ and therefore
$$[B,B]_P=2\imath_C d_EB -2d_EB_C=-2d_EB_C.$$ So, when \eqref{eq30} holds
and $[B,B]_P=0$, the pair $(P,C)$ is a PN structure on $E$. Notice
also that we do not need the anchor of $E$ to be injective as it
was required in \cite{bib22}.

So far, we have used gauge transformations to construct PqNb and
PqN structures from simpler ones. But we can also use gauge
transformations in the opposite way,
  i.e. to get simpler structures from richer ones. For example, given
  a PqNb structure $(P,A,\phi,H)$ on $M$ with $H$ exact, we can choose
  $B\in \Omega^2(M)$ such that $H=dB$ and then consider the gauge
  transformation associated with $-B$, which takes $(P,A,\phi,H)$ to
  the PqN structure $(P,A-C,\phi-dB_C+d(\imath_AB))$. By imposing
  additional restrictions  on $B$, we may obtain a PN structure or even a
  Poisson one. Also, by considering gauge transformations associated
  with closed $2$-forms, we are able to turn PqN structures into PN or
  even Poisson structures.

Next, we will show that more examples of PqNb structures can be
constructed if we combine conformal change with gauge
transformation. First, notice that if $P$ is a Poisson bivector on
$M$ and $f \in C^\infty(M)$ is a Casimir of $P$, then $e^f P$ is a
Poisson tensor:
$$ [e^f P, e^f P ]= e^f ( 2\, [P, e^f ] \wedge P + e^f
[P,P])=0.$$ The bivector $P'=e^f P$ is called the {\em conformal
change} of $P$ by $e^f$.

Take a Poisson bivector $P$ on $M$, a Casimir $f \in C^\infty(M)$
of $P$ and a $2$-form $B$ on $M$. According to Theorem
\ref{theorem6}, $(P,C,-dB_C,dB)$, with $C= P^\sharp B^ \flat$, is
a PqNb structure on $M$, which is obtained from the Poisson tensor
$P$ by the gauge transformation determined by $B$. Consider now
the Poisson bivector $P'=e^fP$ and the $2$-form $B'=e^{-f} B$.
Applying again Theorem \ref{theorem6}, we get a new PqNb structure
on $M$, $(P',C',-dB'_{C'},dB')$, which is related to $(P,C,-dB_C
,dB)$ by the formulae:
\begin{equation}
 P' = e^f P; \, C' = C; \, dB'_{C'} = e^{-f}(dB_C -df \wedge B_C); \, dB'=  e^{-f}(dB-df \wedge B). \nonumber
\end{equation}
We see that the $(1,1)$-tensor $C$ is fixed, while all the other
tensors change. However, if we wish, we may fix the background of
the PqNb structure. It suffices to apply Theorem \ref{theorem6} to
the Poisson tensor $P'=e^f P$ and the $2$-form $B'=B$. In this
case, the $(1,1)$-tensor $C= P^\sharp B ^\flat$ changes to $C'=e^f
C$ and $dB'_{C'}= e^f (dB_C+ df \wedge B_C)$.

Summarizing, we have proved the following:

\begin{proposition}
Let $P$ be a Poisson bivector on $M$, $f \in C^\infty(M)$  a
Casimir of $P$ and $B$ a $2$-form on $M$. Consider the
$(1,1)$-tensor $C= P^\sharp B^ \flat$. Then, $$(e^f P,\, C,\,
e^{-f}(-dB_C + df \wedge B_C), \, e^{-f}(dB - df \wedge B))$$ and
$$(e^f P,\, e^f C,\, e^{f}(-dB_C - df \wedge B_C),\, dB)$$ are Poisson
quasi-Nijenhuis structures with background on $M$.
\end{proposition}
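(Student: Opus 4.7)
The strategy is to apply Theorem \ref{theorem6} twice, to two carefully chosen pairs $(P',B')$ of Poisson bivector and $2$-form on $M$, and then recast the resulting PqNb data in terms of the original objects $P$, $B$, and $f$. The calculation at the start of the paragraph preceding the statement already shows that $P' := e^f P$ is a Poisson bivector whenever $f$ is a Casimir of $P$, so Theorem \ref{theorem6} applies: for any $B' \in \Omega^2(M)$, the quadruple $(P',\,(P')^{\sharp}(B')^{\flat},\,-d((B')_{(P')^{\sharp}(B')^{\flat}}),\,dB')$ is a PqNb structure on $M$.

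For the first quadruple I would set $B' = e^{-f} B$. A direct computation gives $(P')^{\sharp}(B')^{\flat} = (e^f P^{\sharp})\circ (e^{-f} B^{\flat}) = C$, and the background becomes $dB' = d(e^{-f}B) = e^{-f}(dB - df \wedge B)$. Since $(B')_{C}(X,Y) = e^{-f}B(CX,Y) = e^{-f}B_C(X,Y)$, the associated $3$-form is $-d((B')_C) = -d(e^{-f} B_C) = e^{-f}(-dB_C + df \wedge B_C)$. This reproduces exactly the first quadruple in the statement.

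The second quadruple is obtained by choosing instead $B' = B$ (and keeping $P' = e^f P$). Then $(P')^{\sharp}(B')^{\flat} = e^f C$, the background is simply $dB$, and $(B')_{e^f C}(X,Y) = e^f B_C(X,Y)$, so the $3$-form becomes $-d(e^f B_C) = e^f(-dB_C - df \wedge B_C)$. No genuine obstacle arises: everything reduces to Theorem \ref{theorem6} after the right substitution, and the only care needed is to track the factors $e^{\pm f}$ correctly when applying $d$ and the operation $B\mapsto B_{(\cdot)}$. In particular, the fact that $f$ is a Casimir is used only to ensure that $e^f P$ is Poisson; it plays no further role in the verification of the remaining PqNb axioms, which are automatic from Theorem \ref{theorem6}.
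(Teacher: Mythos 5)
Your proposal is correct and follows the same route as the paper: both apply Theorem \ref{theorem6} to the conformally changed Poisson bivector $e^f P$ paired first with $B'=e^{-f}B$ and then with $B'=B$, and then track the factors $e^{\pm f}$ through $C'=(P')^{\sharp}(B')^{\flat}$, $(B')_{C'}$ and the exterior derivative. The computations of $C'$, $dB'$ and $-d((B')_{C'})$ match those in the paragraph preceding the statement, so nothing further is needed.
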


\subsection{Some properties of gauge transformations}

Let us now consider the set $Gauge(M)$ of all gauge
transformations on $\mathfrak{C}_{PqNb}(M)$ and denote by
$\mathfrak{G} :\Omega^2(M) \to Gauge(M)$ the map which assigns to
each $2$-form $B$ on $M$ the gauge transformation $\mathfrak{B}$
associated with $B$, i.e. $\mathfrak{B}=\mathfrak{G}(B)$. We can
give $Gauge(M)$ a natural group structure as follows:
\begin{theorem} \label{gauge:group}
The set $Gauge(M)$ is an abelian group under the composition of
maps, the identity element being the gauge transformation
associated with the zero $2$-form, and the inverse of
$\mathfrak{G}(B)$ being $\mathfrak{G}(-B)$, for all $B\in
\Omega^2(M)$. Moreover, the map $\mathfrak{G}$ is a group
isomorphism from the abelian group $(\Omega^2(M), +)$ into
$(Gauge(M), \circ)$.
\end{theorem}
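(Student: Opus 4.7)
The plan is to reduce everything to a single composition identity: $\mathfrak{G}(B_1)\circ\mathfrak{G}(B_2)=\mathfrak{G}(B_1+B_2)$ for all $B_1,B_2\in\Omega^2(M)$. Once this identity is in hand, the group structure on $Gauge(M)$ and the fact that $\mathfrak{G}$ is a homomorphism follow at once: the identity element is $\mathfrak{G}(0)$ (which acts trivially on each component of a PqNb structure), the inverse of $\mathfrak{G}(B)$ is $\mathfrak{G}(-B)$, and commutativity is inherited from the commutativity of $+$ in $\Omega^2(M)$. Surjectivity of $\mathfrak{G}$ is built into the definition of $Gauge(M)$.

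To prove the composition identity, I would apply $\mathfrak{G}(B_2)$ to an arbitrary PqNb structure $(P,A,\phi,H)$, producing $\bigl(P,\, A+C_2,\, \phi - d(B_2)_{C_2} - d(\imath_A B_2),\, H+dB_2\bigr)$ with $C_i:=P^{\sharp}B_i^{\flat}$, and then apply $\mathfrak{G}(B_1)$. The $P$- and $H$-components match trivially, and the $(1,1)$-tensor becomes $A+C_2+C_1=A+P^{\sharp}(B_1+B_2)^{\flat}$ because $\widetilde{P}=P$, so that the new $C_1$ equals $P^{\sharp}B_1^{\flat}$. The only delicate point is the $\phi$-component: after composition one obtains $\phi - d\bigl[(B_2)_{C_2}+(B_1)_{C_1}+\imath_{C_2}B_1\bigr] - d\bigl(\imath_A(B_1+B_2)\bigr)$, whereas the direct action of $\mathfrak{G}(B_1+B_2)$ gives $\phi - d(B_1+B_2)_{C_1+C_2} - d\bigl(\imath_A(B_1+B_2)\bigr)$. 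Matching the two therefore reduces to the pointwise identity $(B_1+B_2)_{C_1+C_2} = (B_1)_{C_1} + (B_2)_{C_2} + \imath_{C_2}B_1$.

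The main obstacle is this identity, which I would derive from the cross-symmetry $B_2(C_1X,Y)=B_1(X,C_2Y)$. This symmetry is a short consequence of the antisymmetries $P(\alpha,\beta)=-P(\beta,\alpha)$ and $B_i(X,Y)=-B_i(Y,X)$: chasing pairings, $B_2(C_1X,Y)=-\bigl(B_2^{\flat}Y\bigr)\bigl(P^{\sharp}B_1^{\flat}X\bigr)=\bigl(B_1^{\flat}X\bigr)\bigl(P^{\sharp}B_2^{\flat}Y\bigr)=B_1(X,C_2Y)$. Expanding $(B_1+B_2)_{C_1+C_2}(X,Y)$ as the four bilinear terms $B_1(C_1X,Y)+B_1(C_2X,Y)+B_2(C_1X,Y)+B_2(C_2X,Y)$ and rewriting the middle pair via this symmetry gives $B_1(C_2X,Y)+B_1(X,C_2Y)=\imath_{C_2}B_1(X,Y)$, yielding the required identity. (As a consistency check, swapping $1\leftrightarrow 2$ shows $\imath_{C_2}B_1=\imath_{C_1}B_2$, in agreement with the abelian character we are trying to establish.)

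Finally, for injectivity of $\mathfrak{G}$ I would evaluate $\mathfrak{G}(B)=\mathfrak{G}(B')$ on the simple family $(P,0,0,0)$, for $P$ ranging over Poisson bivectors. By Theorem \ref{theorem6} this forces $P^{\sharp}(B-B')^{\flat}=0$ for every Poisson $P$ on $M$. Choosing, in a chart around an arbitrary point $p$, the Poisson bivectors $P_{ij}=\chi\,\partial_i\wedge\partial_j$ with $\chi$ a bump function that depends only on the remaining coordinates (so $\chi$ is Casimir for the constant bivector $\partial_i\wedge\partial_j$ and the product is again Poisson), and letting the pair $(i,j)$ vary, one pins down $(B-B')_p(\cdot,\partial_i)=0$ for every coordinate direction at $p$. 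Thus $B=B'$, and $\mathfrak{G}$ is the claimed group isomorphism.
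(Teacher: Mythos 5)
Your proposal is correct and follows essentially the same route as the paper: the key composition identity $\mathfrak{G}(B_1)\circ\mathfrak{G}(B_2)=\mathfrak{G}(B_1+B_2)$ established via the cross-symmetry $B_2(C_1X,Y)=B_1(X,C_2Y)$, followed by injectivity tested on structures of the form $(P,0,0,0)$ with locally supported Poisson bivectors. One small repair in the last step: a bump function depending only on the coordinates complementary to $(x_i,x_j)$ has cylindrical support and so $\chi\,\partial_i\wedge\partial_j$ need not extend by zero to all of $M$; take instead $\chi$ compactly supported in the chart in all variables, which still works because $f\,\partial_i\wedge\partial_j$ is Poisson for an arbitrary function $f$ (no Casimir condition is needed for a decomposable bivector built from commuting coordinate fields).
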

\begin{proof}
Given any $B_1,B_2\in \Omega^2(M)$, the composition of the
associated gauge transformations is given by
$(\mathfrak{G}(B_1)\circ \mathfrak{G}(B_2))(P,A,\phi,H) =
(\hat{P},\hat{A},\hat{\phi},\hat{H})$ where
\begin{eqnarray}
& & \hat{P} = P  \nonumber  \\
& & \hat{A} = A + C_1 + C_2  \nonumber \\
& & \hat{\phi} = \phi - dB_{2C_2} -
  d(\imath_AB_2) - dB_{1C_1} - d(\imath_AB_1)  - d(\imath_{C_2}B_1)
   \label{eq27.7}
  \\
& & \hat{H} = H + dB_1 + dB_2  \nonumber
\end{eqnarray}
with $C_i=P^{\sharp}B_i^{\flat}$, $i=1,2$. Since
 $B_{2}(C_1X,Y)=B_1(X,C_2Y)$, for all $X,Y\in \X^1(M)$,
we can write \eqref{eq27.7} as \bgdisp \hat{\phi} = \phi -
d(B_1+B_2)_{C_1 + C_2} - d\imath_A(B_1+B_2) \,,  \eddisp and so we
realize that the composition $\mathfrak{G}(B_1)\circ
\mathfrak{G}(B_2)$ is indeed the gauge transformation associated
with $B_1 + B_2$, i.e. \bgdisp
 \mathfrak{G}(B_1+B_2) = \mathfrak{G}(B_1)\circ
\mathfrak{G}(B_2)\,. \eddisp From this relation, the proof of the
first part of the theorem is obvious and this same relation means
that $\mathfrak{G}$ is a group homomorphism. Since by definition
$\mathfrak{G}$ is a surjection, it just remains to prove that it
is an injection. Take $B\in \Omega^2(M)$ and suppose that
$\mathfrak{G}(B) = {\rm Id}$. Then, applying $\mathfrak{G}(B)$ on
PqNb structures of the form $(P,0,0,0)$, we see that
$P^{\sharp}B^{\flat} = 0$ for all Poisson bivectors $P$ on $M$.
Therefore, we must have $B = 0$. In fact, for any point $m \in M$,
we can find local coordinates around $m$, and a bump function on
$M$ which is nonzero at $m$, and prove that if $B\neq 0$, we can
construct a Poisson tensor $P$ such that $P^{\sharp}B^{\flat} \neq
0$.
\end{proof}
We conclude, from Theorem \ref{gauge:group}, that there exists a
group action of $\Omega^2(M)$ on  $\mathfrak{C}_{PqNb}(M)$, given
by
\[
\begin{array}{ccl}
\Omega^2(M) \times \mathfrak{C}_{PqNb}(M) & \to &
\mathfrak{C}_{PqNb}(M) \\
(B, (P,A, \phi,H)) & \mapsto & (P,A+ C, \phi- d B_C- d (\imath_A
B), H+dB).
\end{array}
\]

Two elements of $\mathfrak{C}_{PqNb}(M)$ are said to be {\em gauge
equivalent} if they lie in the same orbit. All equivalent PqNb
structures on $M$ have the same Poisson tensor. However, from the
results of the previous section, one single orbit may contain
different types of structures, i.e we can have gauge equivalence
between Poisson and PqNb structures, between PqN and PqNb, and so
on. In the case of a nondegenerate Poisson bivector we derive the
following:

\begin{proposition}
Given a nondegenerate Poisson bivector $P$ on $M$, the set of all
PqNb structures having $P$ as the associated Poisson bivector is
the $\Omega^2(M)$-orbit of the Poisson structure $(P,0,0,0)$. In
other words, these PqNb structures are all those of the form
$(P,P^{\sharp}B^{\flat},-dB_{P^{\sharp}B^{\flat}},dB)$ with $B\in
\Omega^2(M)$.
\end{proposition}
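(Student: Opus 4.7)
The plan is to prove that every PqNb structure of the form $(P, A, \phi, H)$, with $P$ the fixed nondegenerate Poisson bivector, can be written as $\mathfrak{G}(B)(P,0,0,0)$ for a suitable $B \in \Omega^2(M)$. The opposite inclusion is immediate: by Theorem \ref{theorem6} every quadruple in the $\Omega^2(M)$-orbit of $(P,0,0,0)$ is a PqNb structure, and by Definition \ref{def2} the action fixes the Poisson tensor, so only the converse requires work.

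Since $P$ is nondegenerate, $P^{\sharp}:T^*M\to TM$ is a bundle isomorphism. I would define the candidate $2$-form $B$ by $B^{\flat} := (P^{\sharp})^{-1}\circ A$. The first verification is that $B$ is genuinely a $2$-form, i.e.\ that $B^{\flat}$ is skew-adjoint in the sense $(B^{\flat})^t = -B^{\flat}$. This is a one-line computation from the PqNb compatibility condition \eqref{eq2.1}, $A\circ P^{\sharp} = P^{\sharp}\circ A^t$, combined with the bivector skew-symmetry $(P^{\sharp})^t = -P^{\sharp}$. By construction $C := P^{\sharp}\circ B^{\flat} = A$, so the $(1,1)$-slot of $\mathfrak{G}(B)(P,0,0,0)=(P,C,-dB_C,dB)$ already coincides with $A$.

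To match the background, I would compare the two expressions for the concomitant: on one side, \eqref{eq2.2} gives $\mathcal{C}_{P,A}(\alpha,\beta) = -\imath_{P^{\sharp}\alpha\wedge P^{\sharp}\beta}H$; on the other, Lemma \ref{theorem4.1} applied to $C = A$ gives $\mathcal{C}_{P,A}(\alpha,\beta) = -\imath_{P^{\sharp}\alpha\wedge P^{\sharp}\beta}dB$. Subtracting yields $\imath_{P^{\sharp}\alpha\wedge P^{\sharp}\beta}(H - dB) = 0$ for all $\alpha,\beta\in\Omega^1(M)$, and because $P^{\sharp}$ is surjective the pairs $(P^{\sharp}\alpha, P^{\sharp}\beta)$ exhaust all pairs of vector fields, forcing $H = dB$.

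For the last slot I would apply \eqref{eq2.3} twice: once to the given structure $(P,A,\phi,H)$, and once to the PqNb structure $(P,A,-dB_A,dB)$ produced by Theorem \ref{theorem6} (recall $C=A$ and $H=dB$). Subtracting gives $P^{\sharp}\bigl(\imath_{X\wedge Y}(\phi + dB_A)\bigr) = 0$ for every $X,Y$, and injectivity of $P^{\sharp}$ forces $\phi = -dB_A$. Condition \eqref{eq2.4} needs no separate check since both quadruples now coincide and are already PqNb. The only genuine obstacle is the skew-symmetry of the candidate $B$, but \eqref{eq2.1} delivers it for free; beyond that, nondegeneracy of $P$ does all the work by letting $P^{\sharp}$ serve alternately as a surjection and as an injection.
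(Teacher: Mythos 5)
Your proposal is correct and follows essentially the same route as the paper's own proof: define $B^{\flat}=(P^{\sharp})^{-1}A$, get its antisymmetry from \eqref{eq2.1}, and then use the nondegeneracy of $P$ to force $H=dB$ via \eqref{eq2.2} and $\phi=-dB_{P^{\sharp}B^{\flat}}$ via \eqref{eq2.3}. You merely make explicit (by invoking Lemma \ref{theorem4.1} and Theorem \ref{theorem6}) the uniqueness argument that the paper states more tersely.
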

\begin{proof}
Let $(P,A,\phi,H)$ be a PqNb structure where $P$ is nondegenerate.
Because $P^{\sharp}$ is invertible, $\phi$ and $H$ are the unique
$3$-forms satisfying equations \eqref{eq2.2} and \eqref{eq2.3} for
$P$ and $A$. On the other hand, as a consequence of equation
\eqref{eq2.1}, the $(0,2)$-tensor $B$ defined by
$B^{\flat}=(P^{\sharp})^{-1}A$ is antisymmetric and therefore we
can write $A=P^{\sharp}B^{\flat}$ with $B\in \Omega^2(M)$.
Moreover, the gauge transformation associated with $B$ of the
Poisson structure $(P,0,0,0)$ is
$(P,P^{\sharp}B^{\flat},-dB_{P^{\sharp}B^{\flat}},dB)$. Therefore,
since the $3$-forms $\phi$ and $H$ are unique, we must have
$\phi=-dB_{P^{\sharp}B^{\flat}}$ and $H=dB$. This proves the
result.
\end{proof}

In particular, we have seen that, given a nondegenerate Poisson
bivector $P$ and a $(1,1)$-tensor $A$ satisfying equation
\eqref{eq2.1}, we have one and only one PqNb structure of the form
$(P,A,\cdot,\cdot)$. For degenerate Poisson bivectors, this is not
in general true. For example, given a degenerate Poisson bivector
$P$ on $M$ and $B\in \Omega^2(M)$,
$(P,P^{\sharp}B^{\flat},-dB_{P^{\sharp}B^{\flat}},dB)$ is a PqNb
structure and $(P,P^{\sharp}B^{\flat},-dB_{P^{\sharp}B^{\flat}},dB
+ H)$ is a PqNb structure as well, where $H$ is any $3$-form
satisfying \bgeq\label{eq50} \imath_{P^{\sharp}\alpha \wedge
P^{\sharp}\beta}H=0 \,, \edeq for all $\alpha,\beta \in
\Omega^1(M)$. In particular, when $dB$ satisfies equation
\eqref{eq50}, then
$(P,P^{\sharp}B^{\flat},-dB_{P^{\sharp}B^{\flat}},dB)$ and
$(P,P^{\sharp}B^{\flat},-dB_{P^{\sharp}B^{\flat}},0)$ are both
PqNb structures. It may also happen that, given a degenerate
Poisson bivector $P$ and a $(1,1)$-tensor $A$ such that equation
\eqref{eq2.1} holds, does not exist any PqNb structure
associated with $P$ and $A$ at all. For example, if we take $P$ to
be the null bivector and $A$ any non-Nijenhuis tensor, then
equation \eqref{eq2.1} is trivially satisfied but equation
\eqref{eq2.3} can never hold.

\subsection{Compatibility with reduction}
Now we will consider the concepts of gauge transformation and
reduction of PqNb structures and prove that they commute.
\begin{theorem}\label{theorem5}
Let $(M,P,A,\phi,H)$ be a Poisson quasi-Nijenhuis manifold with
background, $i_N:N\subset M$ a submanifold and $E$ a vector
subbundle of $TM|_N$ as
in Definition \ref{def1.5}, and suppose that all the conditions
of Theorem \ref{theorem1} are satisfied, so that $(M,P,A,\phi,H)$
is reducible  to a Poisson quasi-Nijenhuis manifold with
background $(Q,P',A',\phi',H')$. Let also $B$ be a $2$-form on $M$
such that:
\begin{itemize}
\item[(a)]
 $B^{\flat}(TN)\subset E^0$ \,;
\item[(b)] $B$ is projectable to a $2$-form $B'$ on $Q$.
\end{itemize}
 Consider the gauge transformation of $(P,A,\phi,H)$
associated with $B$,
$(\tilde{P},\tilde{A},\tilde{\phi},\tilde{H})$, as in Theorem
\ref{theorem4.0}. Then,
$(M,\tilde{P},\tilde{A},\tilde{\phi},\tilde{H})$ reduces  to a
Poisson quasi-Nijenhuis manifold with background
$(Q,\tilde{P}',\tilde{A}',\tilde{\phi}',\tilde{H}')$ which is also
the gauge transformation of $(P',A',\phi',H')$ associated with
$B'$. In other words, the diagram \bgdisp
\xymatrix{ (M,P,A,\phi,H) \ar[r]^{\mathfrak{B}} \ar[d]^{\pi} & (M,\tilde{P},\tilde{A},\tilde{\phi},\tilde{H}) \ar[d]^{\pi} \\
           (Q,P',A',\phi',H') \ar[r]^{\mathfrak{B}'} &  (Q,\tilde{P}',\tilde{A}',\tilde{\phi}',\tilde{H}')}
\eddisp is commutative, where $\mathfrak{B},\mathfrak{B}'$ are the
gauge transformations on $\mathfrak{C}_{PqNb}(M)$,
$\mathfrak{C}_{PqNb}(Q)$ associated with $B,B'$, respectively, and
$\pi:N\to Q$ is the canonical projection.
\end{theorem}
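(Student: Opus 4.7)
The plan is to show that the gauged structure $(\tilde{P},\tilde{A},\tilde{\phi},\tilde{H})$ satisfies the hypotheses of Theorem \ref{theorem1} with respect to the same $N$ and $E$, and then to verify that the resulting reduction coincides with $\mathfrak{B}'(P',A',\phi',H')$.

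For the first part, conditions (i) and (ii) of Theorem \ref{theorem1} are immediate since $\tilde{P}=P$. Writing $\tilde{A}=A+C$ with $C=P^{\sharp}B^{\flat}$ splits condition (iii) into the (given) claims for $A$ plus the analogous claims for $C$. The inclusion $C(TN)\subset TN$ is a direct consequence of (a) together with the hypothesis $P^{\sharp}(E^{0})\subset TN$. For $C(E)\subset E$, given $X\in E$ and $\alpha\in E^{0}$ extended to a $1$-form on $M$ vanishing on $E$, the identity $\alpha(CX)=-B(X,P^{\sharp}\alpha)$ combined with $P^{\sharp}\alpha\in TN$ and the antisymmetry of $B$ yields $\alpha(CX)=0$ by (a). To handle the projectability part of (iii), I would prove the stronger statement that $C|_{TN}$ descends to $C':=P'^{\sharp}B'^{\flat}$ on $Q$: given $X\in T_{p}N$, choose an extension $\widetilde{\pi^{*}\lambda'}$ of $\pi^{*}B'^{\flat}(d\pi X)$ that vanishes on $E$ and whose value at $p$ coincides with $B^{\flat}X|_{p}$; this is possible because $B^{\flat}X|_{p}\in E_p^{0}$ by (a), while its restriction to $T_{p}N$ already matches $\pi^{*}B'^{\flat}(d\pi X)|_{T_pN}$ by (b). Formula \eqref{eq11.6} applied at $p$ then gives $d\pi_p(CX)=C'(d\pi X)$.

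For condition (iv), I must verify $i_N^*(\imath_X\tilde{H})=0=i_N^*(\imath_X\tilde{\phi})$ for $X|_N\in\Gamma(E)$. For $\tilde{H}=H+dB$ the $H$-piece is given by hypothesis; for the $dB$-piece, I would use Cartan's identity $\imath_XdB=\mathcal{L}_XB-d\imath_XB$, observe that $i_N^*(\imath_XB)=0$ (because $\imath_XB(Y)=B(X,Y)=0$ for $Y\in TN$ by (a) and antisymmetry), so $i_N^*(d\imath_XB)=0$, and then combine (a) with the projectability hypothesis (b) to conclude $i_N^*(\mathcal{L}_XB)=0$. The verification for $\tilde{\phi}=\phi-dB_C-d(\imath_AB)$ is analogous, exploiting the now-established inclusions $A(E)\subset E$ and $C(E)\subset E$ together with the same Cartan manipulation applied to the auxiliary $2$-forms $B_C$ and $\imath_AB$.

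Once reducibility is established, the identification is essentially bookkeeping: $\tilde{P}'=P'$ is trivial; $\tilde{H}'=H'+dB'$ follows from $d$ commuting with pullback; $\tilde{A}'=A'+C'=A'+P'^{\sharp}B'^{\flat}$ is what was proved above; and $\tilde{\phi}'=\phi'-dB'_{C'}-d(\imath_{A'}B')$ follows from the two projection identities $i_N^{*}B_C=\pi^{*}B'_{C'}$ and $i_N^{*}(\imath_AB)=\pi^{*}(\imath_{A'}B')$, both immediate verifications from the descents of $A$, $B$, and $C$ restricted to $TN$. The result is exactly $\mathfrak{B}'(P',A',\phi',H')$, closing the diagram. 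The main obstacle is condition (iv) for the gauge-induced three-forms, since showing $i_N^*(\mathcal{L}_XB)=0$ for arbitrary $X\in\X^1(M)$ with $X|_N\in\Gamma(E)$ requires a careful use of both (a) and the full content of (b); the identification $C'=P'^{\sharp}B'^{\flat}$ is more routine but depends crucially on the extension trick enabled by Lemma \ref{theorem0.75}.
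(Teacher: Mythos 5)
Your overall strategy --- re-verify the hypotheses of Theorem \ref{theorem1} for $(\tilde P,\tilde A,\tilde\phi,\tilde H)$ and then identify the output --- diverges from the paper's proof, and it runs into a genuine gap at exactly the point you flag as ``the main obstacle''. Condition (iv) of Theorem \ref{theorem1} for $\tilde H=H+dB$ requires $i_N^*(\imath_X dB)=0$ for every $X$ with $X|_N\in\Gamma(E)$, and this does \emph{not} follow from (a) and (b): writing $\imath_X dB=\mathcal{L}_XB-d\imath_XB$, the term $i_N^*(d\imath_XB)$ does vanish as you say, but $i_N^*(\mathcal{L}_XB)$ involves derivatives of $B$ in the directions of $E$, which are in general transverse to $N$ (nothing in Definition \ref{def1.5} forces $E\subset TN$), and neither (a) nor (b) controls those. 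Concretely, take $M=\mathbb{R}^3$, $N=\{z=0\}$, $E=\mathrm{span}(\partial_z)|_N$ (so $E\cap TN=0$ and $Q=N$), the trivial PqNb structure $P=A=\phi=H=0$, and $B=f\,dx\wedge dy$ with $\partial_zf\neq0$ along $N$; then (a), (b) and all hypotheses of Theorem \ref{theorem1} hold, yet $i_N^*(\imath_{\partial_z}dB)=(\partial_zf)|_N\,dx\wedge dy\neq0$. So Theorem \ref{theorem1} cannot be invoked for the gauged structure, even though its conclusion (reducibility) is still true; the same problem recurs for the pieces $dB_C$ and $d(\imath_AB)$ of $\tilde\phi$.

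The paper avoids this entirely by not going back through Theorem \ref{theorem1}. By Definition \ref{def1.8}, reducibility of $(\tilde P,\tilde A,\tilde\phi,\tilde H)$ only requires exhibiting \emph{some} PqNb structure on $Q$ whose tensors are the projections of $\tilde P,\tilde A,\tilde\phi,\tilde H$; the candidate $\mathfrak{B}'(P',A',\phi',H')$ is already a PqNb structure by Theorem \ref{theorem4.0} applied on $Q$, so only the projection identities remain. For the $3$-forms these follow from commuting $d$ with $i_N^*$ and $\pi^*$ (e.g.\ $i_N^*dB=d\,i_N^*B=d\,\pi^*B'=\pi^*dB'$) together with $i_N^*B_C=\pi^*B'_{C'}$ and $i_N^*(\imath_AB)=\pi^*(\imath_{A'}B')$ --- no analogue of condition (iv) is needed. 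Your remaining steps (the inclusions $C(TN)\subset TN$ and $C(E)\subset E$, the descent of $C$ to $C'=P'^{\sharp}B'^{\flat}$ via Lemma \ref{theorem0.75}, and the final bookkeeping) are correct and essentially coincide with what the paper does, but the proof as proposed does not close unless the appeal to Theorem \ref{theorem1} is replaced by this direct verification of Definition \ref{def1.8}.
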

\begin{proof}
The gauge transformation of
$(P',A',\phi',H')$, associated with $B'$, is the PqNb structure
$(\tilde{P'},\tilde{A'},\tilde{\phi'},\tilde{H'})$ on $Q$ where
$\tilde{P'}=P'$, $\tilde{A'}=A'+C'$,
$\tilde{\phi'}=\phi'-dB'_{C'}-d(\imath_{A'}B')$, and
$\tilde{H'}=H'+dB'$, with $C'$ denoting the $(1,1)$-tensor
$P'^{\sharp}B'^{\flat}$. Therefore, by Definition \ref{def1.8},
we have to prove that the tensors
$\tilde{P},\tilde{A},\tilde{\phi},\tilde{H}$, given by equations
\eqref{eq23.1}-\eqref{eq23.4}, project respectively to the tensors
$\tilde{P'},\tilde{A'},\tilde{\phi'},\tilde{H'}$. By assumption,
$P,A,\phi,H,dB$ project to $P',A',\phi',H',dB'$, respectively.
Moreover, $C$ projects to $C'$ (see the proof of Theorem 3.1 in
\cite{bib14}; condition (a) above, as well as condition (ii) in
Theorem \ref{theorem1}, are needed here) and this implies that
$B_C$ projects to $B'_{C'}$:
\begin{eqnarray}
(\pi^*B'_{C'})(X,Y) & = & B'(C'\pi_*X,\pi_*Y)\circ \pi = (\pi^*B')(C|_{TN}X,Y) \nonumber\\
   & = & (i_N^*B)(C|_{TN}X,Y) = (i_N^*B_C)(X,Y)\,, \nonumber
\end{eqnarray}
for all projectable vector fields $X,Y\in \X^1(N)$, so that in
particular $dB_C$ projects to $dB'_{C'}$. With a similar
reasoning, we prove that $\imath_{A}B$ projects to $\imath_{A'}B'$
and consequently we have the same for their exterior derivatives.
\end{proof}

\section*{Appendix}
{\bf Proof of Lemma \ref{theorem4.1}.}
For equations \eqref{eq24} and \eqref{eq25} see reference
\cite{bib11}, formulas (B.3.9) and (B.3.8), respectively. As for
equation \eqref{eq25.2}, we have
\begin{eqnarray*}
\lefteqn{d_CB_C(X,Y,Z) = (CX)B(CY,Z) - (CY )B(CX,Z) + (CZ)B(CX,Y)}\\
&- & B(C[CX,Y],Z) - B(C[X,CY],Z) + B(C^2[X,Y],Z) \\
&+ & B(C[CX,Z],Y) + B(C[X,CZ], Y ) - B(C^2[X,Z],Y) \\
&- & B(C[CY,Z],X) - B(C[Y,CZ],X) + B(C^2[Y,Z],X)  \\
& = & dB(CX,CY,Z) + B([CX,CY],Z) + dB(CY,CZ,X) -(CY)B(CZ,X)  \\
& +& (CZ)B(CY,X) + B([CY,CZ],X) + dB(CZ,CX,Y) + (CX)B(CZ,Y) \\
&+ & B([CZ,CX],Y) - dB_{C^2}(X,Y,Z)\\
&= &  \mathcal{B}^{C,C}(X,Y,Z) - dB_{C^2}(X,Y,Z) - P([B^{\flat}X,B^{\flat} Y]_P,B^{\flat} Z)  \\
& -&  P([B^{\flat} Y,B^{\flat} Z]_P,B^{\flat} X) - P([B^{\flat}Z,B^{\flat} X]_P,B^{\flat} Y) \\
& + & P^{\sharp}(B^{\flat} Y)P(B^{\flat} Z,B^{\flat} X) - P^{\sharp}(B^{\flat} Z)P(B^{\flat} Y,B^{\flat} X)- P^{\sharp}(B^{\flat} X)P(B^{\flat} Z,B^{\flat} Y)\\
& =& \mathcal{B}^{C,C}(X,Y,Z) - dB_{C^2}(X,Y,Z)  + d_PP(B^{\flat} X,B^{\flat} Y,B^{\flat} Z)\\
& =& \mathcal{B}^{C,C}(X,Y,Z) - dB_{C^2}(X,Y,Z) \,,
\end{eqnarray*}
for all $X,Y,Z\in \X^1(M)$, where we used the fact of $P$ being
Poisson in the third and in the last equalities.
{\hfill $\Box$}

\

\noindent {\bf Proof of Lemma \ref{theorem4.2.5}.}
For proving \eqref{eq26}, we take $\alpha\in \Omega^1(M)$ and
apply it on the right hand side (RHS) of the equation. This gives,
using \eqref{eq25.5}, \eqref{eq25.6} and \eqref{eq1.8},
\begin{eqnarray*}
\lefteqn{\alpha({\rm RHS}) = -dB(AX,Y,Q^\sharp\alpha) - dB(X,AY,Q^\sharp\alpha)+ d(\imath_AB)(X,Y,Q^\sharp\alpha)}\\
& - & H(CX,Y,Q^{\sharp}\alpha) - H(X,CY,Q^{\sharp}\alpha) \\
& = & -(AX)B(Y,Q^{\sharp}\alpha) + B([AX,Y],Q^{\sharp}\alpha) -
  B([AX,Q^{\sharp}\alpha],Y)\\
  & + &  (AY)B(X,Q^{\sharp}\alpha) + B([X,AY],Q^{\sharp}\alpha) +
  B([AY,Q^{\sharp}\alpha],X) \\
& + &  XB(Y,AQ^{\sharp}\alpha) - YB(X,AQ^{\sharp}\alpha) - B(A[X,Y],Q^{\sharp}\alpha)- B([X,Y],AQ^{\sharp}\alpha)  \\
& + &   B(A[X,Q^{\sharp}\alpha],Y) - B(A[Y,Q^{\sharp}\alpha],X)-\mathcal{C}_{Q,A}(B^{\flat}X,\alpha)(Y) +\mathcal{C}_{Q,A}(B^{\flat}Y,\alpha)(X) \\
& = & B([AX,Y],Q^{\sharp}\alpha) + B([X,AY],Q^{\sharp}\alpha) -
  B(A[X,Y],Q^{\sharp}\alpha)- B([X,Y],AQ^{\sharp}\alpha)  \\
  & - &  \alpha(A[CX,Y]) +
\alpha([CX,AY]) + \alpha(A[CY,X]) - \alpha([CY,AX]) \\
& = & \alpha\left([AX,CY]-A[CX,Y]-A[X,CY]+AC[X,Y] \right.  \\
 & + & \left.  [CX,AY]-C[AX,Y]-C[X,AY]+CA[X,Y]\right) \,,
\end{eqnarray*}
for any $X,Y\in \X^1(M)$. As for \eqref{eq27}, we have
\begin{eqnarray*}
\lefteqn{d_AB_C(X,Y,Z) + d_C(\imath_AB)(X,Y,Z) = (AX)B(CY,Z)- (AY)B(CX,Z)}  \\
&+ &  (AZ)B(CX,Y) + (CX)(\imath_AB)(Y,Z) - (CY)(\imath_AB)(X,Z)  \\
&+ & (CZ)(\imath_AB)(X,Y) - B(C[X,Y]_A,Z) + B(C[X,Z]_A,Y) - B(C[Y,Z]_A,X)   \\
& -& (\imath_AB)([X,Y]_C,Z) +(\imath_AB)([X,Z]_C,Y) - (\imath_AB)([Y,Z]_C,X)    \\
& = & \mathcal{B}^{A,C}(X,Y,Z) + \mathcal{B}^{C,A}(X,Y,Z) - dB_{AC}(X,Y,Z) - d(\imath_{CA}B)(X,Y,Z)   \\
 & - & C_{Q,A}(B^{\flat}X,B^{\flat}Y)(Z) - C_{Q,A}(B^{\flat}Y,B^{\flat}Z)(X) - C_{Q,A}(B^{\flat}Z,B^{\flat}X)(Y)  \\
& = & \mathcal{B}^{A,C}(X,Y,Z) + \mathcal{B}^{C,A}(X,Y,Z) -
 dB_{AC}(X,Y,Z) - d(\imath_{CA}B)(X,Y,Z)   \\
 & +&  \mathcal{H}^{C,C}(X,Y,Z) \,,
\end{eqnarray*}
for all $X,Y,Z\in \X^1(M)$.
{\hfill $\Box$}

\

\noindent {\bf Proof of Lemma \ref{theorem4.3.5}.}
One just has to expand $d_A(\imath_AB)$ and then use
\eqref{eq27.1}:
\begin{eqnarray*}
\lefteqn{d_A(\imath_AB)(X,Y,Z)=(AX)(\imath_AB)(Y,Z) -
(AY)(\imath_AB)(X,Z)+ (AZ)(\imath_AB)(X,Y)
 }\\
 & &- (\imath_AB)([X,Y]_A,Z) + (\imath_AB)([X,Z]_A,Y)
- (\imath_AB)([Y,Z]_A,X) \\
&=& \mathcal{B}^{A,A}(X,Y,Z) - dB_{A,A}(X,Y,Z) - B^{\flat}(Z)\left(\mathcal{N}_{A}(X,Y)\right) \nonumber\\
& & + B^{\flat}(Y)\left(\mathcal{N}_{A}(X,Z)\right) - B^{\flat}(X)\left(\mathcal{N}_{A}(Y,Z)\right) \nonumber\\
&=& \mathcal{B}^{A,A}(X,Y,Z) - dB_{A,A}(X,Y,Z) \nonumber\\
& & - B^{\flat}(Z)\left(Q^{\sharp}\left(\imath_{X\wedge Y}\phi +
\imath_{AX\wedge Y}H + \imath_{X\wedge AY}H \right)\right) \\
& & + B^{\flat}(Y)\left(Q^{\sharp}\left(\imath_{X\wedge Z}\phi +
\imath_{AX\wedge Z}H + \imath_{X\wedge AZ}H \right)\right) \\
& & - B^{\flat}(X)\left(Q^{\sharp}\left(\imath_{Y\wedge Z}\phi +
\imath_{AY\wedge Z}H + \imath_{Y\wedge AZ}H \right)\right) \\
&=& \mathcal{B}^{A,A}(X,Y,Z) - dB_{A,A}(X,Y,Z) + (\imath_C\phi)(X,Y,Z)\\
& &  + \mathcal{H}^{A,C}(X,Y,Z) + \mathcal{H}^{C,A}(X,Y,Z)
,
\end{eqnarray*}
for all $X,Y,Z\in \X^1(M)$.
{\hfill $\Box$}

\section*{Acknowledgements}
This work has been partially supported by FCT grant SFRH/BD/44594/2008 (FC) and by CMUC-FCT and project PTDC/MAT/69635/2006 (JMNdC).

\end{document}